\theoremstyle{definition}
\def\fnum{equation} 
\newtheorem{Thm}[\fnum]{Theorem}
\newtheorem{Cor}[\fnum]{Corollary}
\newtheorem{Lem}[\fnum]{Lemma}
\newtheorem{Con}[\fnum]{Conjecture}
\newtheorem{Pro}[\fnum]{Proposition}
\numberwithin{equation}{section}
\newcommand{\R}{{\text{R}}}
\newcommand{\nn}{{\bf{n}}}
\newcommand{\Ric}{{\text{Ric}}}
\newcommand{\Hess}{{\text {Hess}}}
\def\ZZ{{\bold Z}}
\def\RR{{\bold R}}
\def\SS{{\bold S}}
\def\CC{{\bold C }}
\newcommand{\e}{{\text {e}}}
\newcommand{\Area}{{\text {Area}}}
\def\bH{{\bold H}}
\def\bN{{\bold N}}
\def\bB{{\bold B}}
\newcommand{\cA}{{\mathcal{A}}}
\newcommand{\cF}{{\mathcal{F}}}
\newcommand{\cL}{{\mathcal{L}}}
\newcommand{\cS}{{\mathcal{S}}}
\newcommand{\cV}{{\mathcal{V}}}
\newcommand{\eqr}[1]{(\ref{#1})}
\title[Entropy and codimension bounds for generic singularities]{Entropy and codimension bounds for generic singularities}
\author{Tobias Holck Colding}%
\address{MIT, Dept. of Math.\\
77 Massachusetts Avenue, Cambridge, MA 02139-4307.}
\author{William P. Minicozzi II}%
\thanks{The  authors
were partially supported by NSF Grants DMS 1812142 and DMS 1707270.}
\email{colding@math.mit.edu and minicozz@math.mit.edu}
\begin{document}

\maketitle

\begin{abstract}
We show that all closed $2$-dimensional singularities for higher codimension mean curvature flow that cannot be perturbed away have uniform entropy bounds and lie in a linear subspace of small dimension.  The entropy and dimension of the subspace are both $\leq C\,(1+\gamma)$ for some universal constant $C$ and genus $\gamma$. 
These are the first general bounds on generic singularities in arbitrary codimension.
\end{abstract}

 \section{Introduction}
Even for hypersurfaces,  examples show that singularities of mean curvature flow (MCF) are too numerous to classify.     The hope is that the generic ones that cannot be perturbed away are much simpler.   
Indeed for hypersurfaces in all dimensions generic singularities have  been classified in \cite{CM3}.  These are round generalized cylinders $\SS^k_{\sqrt{2\,k}}\times \RR^{n-k}$.   

Higher codimension  MCF   is a complicated nonlinear parabolic system where much less is known.    
 Singularities are modeled by shrinkers that evolve by scaling, \cite{H}, \cite{I}, \cite{W}.             We show that the only closed $2$-dimensional generic singularities, i.e.,   $F$-stable shrinkers, have a uniform entropy bound and lie   in a small linear subspace.  The entropy and dimension of the subspace are both $\leq C\,(1+\gamma)$ for a universal constant $C$ and genus $\gamma$. 

The entropy is a Lyapunov function for the flow that is particularly useful.  To define it, recall that the Gaussian surface area $F$ of an $n$-dimensional submanifold
 $\Sigma^n \subset \RR^N$ is
\begin{align}
	F(\Sigma) = \left( 4\,\pi \right)^{ - \frac{n}{2}} \,  \int_{\Sigma} \e^{ - \frac{|x|^2}{4} } \, .
\end{align}
Following \cite{CM3}, the entropy $\lambda$ is the supremum of $F$ over all translations and dilations
\begin{align}    
	 \lambda (\Sigma) = \sup_{c,x_0} \, F (c\,\Sigma + x_0)
\end{align}
For a shrinker, the entropy is equal to the $F$-functional.  
   By Huisken's monotonicity, \cite{H}, it follows that $\lambda$ is monotone nonincreasing under the flow.  From this, and lower semi continuity of $\lambda$, 
all blowups have entropy bounded by that of the initial submanifold.   

An immersed submanifold $\Sigma^n \subset \RR^{N}$ is a  {\emph{shrinker}} if  the mean curvature ${\bf{H}} = \frac{1}{2} \, x^{\perp}$.

\begin{Thm}	\label{t:entropyextra2}
There exists a universal constant $C$ so that
if $\Sigma^2 \subset \RR^N$ is a closed $F$-stable shrinker of genus $g$ and $N \geq C \, \lambda (\Sigma)$, then $\Sigma\subset \cV$ where $ \cV$ is a linear subspace and 
\begin{align}
\lambda (\Sigma) &\leq C \, (1+\gamma)\, ,	\label{e:entro1} \\
\dim \,  \cV &\leq C \, (1+\gamma)\, .\label{e:entro2}
\end{align}
\end{Thm}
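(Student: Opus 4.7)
The plan is to derive both bounds from spectral analysis of the $F$-stability operator $L$ on normal sections of $\Sigma$, using (i) symmetry-generated eigenfunctions of $L$, (ii) the Gauss equation combined with Gauss--Bonnet, and (iii) a Cheng--Yang type Morse index bound on genus-$\gamma$ surfaces.

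\textbf{Step 1 (Symmetry eigenfunctions).} A direct calculation shows that for every constant vector $v \in \RR^{N}$ the normal section $v^{\perp}$ satisfies $L(v^{\perp}) = \tfrac12 v^{\perp}$, and the mean curvature vector ${\bf H} = x^{\perp}/2$ satisfies $L({\bf H}) = {\bf H}$. Let $\cV$ be the smallest linear subspace of $\RR^{N}$ containing $\Sigma$. For $v \in \cV \setminus \{0\}$ the section $v^{\perp}$ is a nonzero section of the normal bundle of $\Sigma$ in $\cV$, and together with ${\bf H}$ these produce $\dim\cV + 1$ linearly independent positive eigenfunctions of the restriction of $L$ to the normal bundle of $\Sigma$ in $\cV$, which I denote $L_{\cV}$.

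\textbf{Step 2 ($F$-stability pins down the Morse index).} The normal bundle of $\Sigma$ in $\RR^{N}$ splits orthogonally as the normal bundle of $\Sigma$ in $\cV$ plus the trivial subbundle $\Sigma\times\cV^{\perp}$, and $L$ preserves this splitting (the Simons-type curvature term annihilates $\cV^{\perp}$ because the second fundamental form of $\Sigma$ takes values only in $\cV$). $F$-stability forces every positive eigenfunction of $L$ to be either the dilation ${\bf H}$ or a translation $v^{\perp}$; on the $\cV$-summand this gives exactly the eigenfunctions of Step~1, so the Morse index of $L_{\cV}$ equals $\dim\cV + 1$.

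\textbf{Step 3 (Entropy bound).} The entropy of a shrinker equals its $F$-functional. The shrinker equation ${\bf H} = x^{\perp}/2$ yields $|{\bf H}|^{2} = |x^{\perp}|^{2}/4$, and the Gauss equation for a 2-surface in $\RR^{N}$ gives $|{\bf H}|^{2} = 2|\mathring A|^{2} + 4K$ with $K$ the Gauss curvature and $\mathring A$ the traceless second fundamental form. A weighted integration by parts anchored on the shrinker identity $\Delta_{\Sigma}|x|^{2}/2 = 2 + 2|{\bf H}|^{2}$ rewrites $F(\Sigma)$ in terms of $\int_{\Sigma} K\, e^{-|x|^{2}/4}$ and $\int_{\Sigma} |\mathring A|^{2}\, e^{-|x|^{2}/4}$. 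Gauss--Bonnet $\int_{\Sigma}K = 4\pi(1-\gamma)$ together with a weighted Willmore/Simons-type control on the traceless term then produce $\lambda(\Sigma) \leq C(1+\gamma)$.

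\textbf{Step 4 (Dimension bound).} A Cheng--Yang type eigenvalue-counting inequality bounds the Morse index of a Schr\"odinger-type operator on a closed genus $\gamma$ surface by a constant times $(1+\gamma)$ whenever the effective potential has controlled weighted $L^{2}$-norm. For $L_{\cV}$ the potential is essentially $|A|^{2} + \tfrac12$, and the Gauss identity $|A|^{2} = |{\bf H}|^{2} - 2K$ combined with Step~3 bounds $\int_{\Sigma}|A|^{2}\, e^{-|x|^{2}/4}$ in terms of $(1+\gamma)$. Combining this Morse index bound with Step~2 gives $\dim\cV + 1 \leq C(1+\gamma)$.

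\textbf{Main obstacle.} The hardest step will be Step~4: the raw Cheng--Yang bound naturally outputs something of the form $C(1+\gamma)\bigl(1 + \int|A|^{2}e^{-|x|^{2}/4}\bigr)$, which together with Step~3 only yields a bound quadratic in $(1+\gamma)$. Achieving the claimed linear bound requires a more delicate cutoff/covering argument that exploits the Gaussian concentration of the weight and the sharp integrated $|A|^{2}$ control from Gauss--Bonnet. The hypothesis $N \geq C\lambda(\Sigma)$ should enter here to guarantee sufficient ambient room that $F$-stability rules out any ``extra'' $1/2$-eigenfunctions in the $\cV^{\perp}$-directions that could inflate the Morse index beyond $\dim\cV + 1$.
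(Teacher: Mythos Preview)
Your approach diverges substantially from the paper's, and Step~3 contains a genuine gap that cannot be repaired along the lines you sketch.

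\textbf{The gap in Step 3.} Your entropy bound argument makes no essential use of $F$-stability: you propose to rewrite $F(\Sigma)$ via the shrinker identity and Gauss--Bonnet, then invoke an unspecified ``weighted Willmore/Simons-type control'' on $\int |\mathring A|^{2}\,\e^{-f}$. But Gauss--Bonnet controls $\int_\Sigma K$, not the weighted integral $\int_\Sigma K\,\e^{-f}$, and there is no a priori bound on $\int |\mathring A|^{2}\,\e^{-f}$ for a general closed shrinker. Without $F$-stability the conclusion $\lambda(\Sigma)\le C(1+\gamma)$ is simply false (closed shrinking surfaces of fixed genus can have arbitrarily large entropy), so stability must enter the entropy estimate directly, not only through Steps~1--2. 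Your Step~4 then inherits this gap, since its potential bound depends on Step~3; and as you yourself note, even granting Step~3 the Cheng--Yang style count gives only a quadratic dependence on $(1+\gamma)$.

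\textbf{How the paper proceeds.} The paper uses $F$-stability in a completely different way. The hypothesis $N\ge C\,\lambda(\Sigma)$ is invoked, via the codimension estimate of \cite{CM5}, to guarantee that the minimal subspace $\cV$ is \emph{proper}, so there exists a unit vector $E\in\cV^\perp$. For any $\cL$-eigenfunction $\phi$ with eigenvalue $\mu>0$, the normal section $\phi\,E$ satisfies $L(\phi\,E)=(\tfrac12-\mu)\,\phi\,E$ and is orthogonal to $\bH$ and to all translations; $F$-stability then forces $\mu\ge\tfrac12$, i.e.\ $\mu_1(\cL)\ge\tfrac12$. This is combined with a Korevaar-type inequality $\mu_1(\cL)\,\lambda(\Sigma)\le C(1+\gamma)$ (obtained by comparing $\cL$ with the Laplacian of the conformal metric $\e^{-f}g$) to give $\lambda(\Sigma)\le 2C(1+\gamma)$ directly. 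The dimension bound \eqr{e:entro2} then follows by feeding this entropy bound back into the codimension estimate of \cite{CM5}. In particular, the role of $N\ge C\,\lambda(\Sigma)$ is to produce the extra constant normal direction $E$, not to control any index count on the $\cV^\perp$-summand as you suggest.

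Your Steps~1--2 are essentially correct (the Morse index of $L_\cV$ is indeed $\dim\cV+1$ for a closed $F$-stable shrinker), but they are not used in the paper's argument, and they do not by themselves lead to either bound without an independent entropy estimate.
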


This gives the first general bounds on generic singularities of surfaces in arbitrary codimension.
When $\Sigma$ is diffeomorphic to a sphere, \eqr{e:entro1} becomes
\begin{align}	\label{e:becomes}
	\lambda (\Sigma) &<  4 = \e \, \lambda (\SS^2_2) \, .
\end{align}
The sharp constant is unknown, but \eqr{e:becomes} is at most off by a 
  factor of   $\e$.
Theorem \ref{t:entropyextra2} holds even when the $F$-index is not zero, with $C$   depending on the index. 

There is no analog of \eqr{e:entro1} for minimal surfaces   in $\RR^4$.  Namely, viewing $\RR^4$ as $\CC^2$ one sees that for each integer $m$  the parametrized complex submanifold $z\to (z,z^m)$ is a stable minimal variety that is topologically a plane.  It has $\Area (B_r\cap \Sigma)\geq C\,m\,r^2$ for $r\geq 1$.   In contrast, Theorem \ref{t:entropyextra2} implies that  $\Area (B_r\cap\Sigma)\leq C\,(1+\gamma)\,r^2$ for a closed stable $2$-dimensional shrinker  $\Sigma$  of genus $\gamma$.   Similarly, there is no analog of \eqr{e:entro2} for minimal surfaces.   Indeed, for each $m$, the parametrized surface $z\to (z,z^2,z^3,\cdots,z^{m+1})$ is a stable minimal variety that is topologically a plane.  Its real codimension is $2\,m$ and it is not contained in a proper subspace.   
In contrast to \eqr{e:entro1} and \eqr{e:entro2}, very little is known about stable minimal surfaces in higher codimension.  A notable exception is a result of Micallef, \cite{Mi}, that    a   stable oriented parabolic minimal surface   in $\RR^4$ is complex for some orthogonal complex structure.

\vskip1mm
Entropy is bounded from below by the Gaussian Willmore functional, see Lemma \ref{l:UB}.   
We will also prove a sharp upper bound for the Gaussian Willmore functional $W$ 
\begin{align}
	W(\Sigma)=(4\,\pi)^{-\frac{n}{2}}\int_{\Sigma} |\bH|^2 \, \e^{-\frac{|x|^2}{4}} \, .
\end{align}
The next theorem gives a sharp bound for $W$, in arbitrary codimension,  for stable shrinkers that are topological spheres.

\vskip2mm
\begin{Thm}	\label{t:stableWillmoreA}
If   $\Sigma^2 \subset \RR^N$ is a $F$-stable  shrinker diffeomorphic to a sphere, then 
$W (\Sigma) \leq W(\SS^2_2)$.   With equality if and only if $ \Sigma  = \SS^2_2 \subset \RR^3$ up to rotation. 
\end{Thm}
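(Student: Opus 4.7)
The plan is to exploit $F$-stability with the identity $L\bH = \bH$ for the Jacobi operator $L$, using test variations $V = f\bH$ built from conformal coordinates on $\Sigma \cong \SS^2$ combined with Hersch's lemma.

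First I would derive the key weighted Poincar\'e inequality. A direct calculation using $L\bH = \bH$ and integration by parts with the drift Laplacian reduces the second-variation quadratic form to
\begin{equation*}
\int_\Sigma \langle L(f\bH), f\bH\rangle\, e^{-|x|^2/4}\, dA \;=\; \int_\Sigma (f^2 - |\nabla f|^2)\,|\bH|^2\, e^{-|x|^2/4}\, dA.
\end{equation*}
Hence, whenever the normal variation $V = f\bH$ is $L^2$-orthogonal (in the Gaussian weight) to the trivial directions spanned by translations $v^\perp$ for $v \in \RR^N$ and by the dilation direction $\bH$, $F$-stability gives the weighted Poincar\'e inequality
\begin{equation*}
\int_\Sigma f^2\,|\bH|^2\, e^{-|x|^2/4}\, dA \;\le\; \int_\Sigma |\nabla f|^2\,|\bH|^2\, e^{-|x|^2/4}\, dA.
\end{equation*}

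Next I would construct the test functions using the conformal structure on $\Sigma$. By uniformization $\Sigma$ is conformally $\SS^2$, and Hersch's lemma applied with weight $|\bH|^2 e^{-|x|^2/4}\,dA$, together with the remaining freedom in the M\"obius group of $\SS^2$ and the shrinker integration-by-parts identity $\int_\Sigma x\,e^{-|x|^2/4}\,dA = 0$, produces a conformal diffeomorphism $\phi = (\phi_1, \phi_2, \phi_3)\colon \Sigma \to \SS^2_2 \subset \RR^3$ for which each $V_i = \phi_i \bH$ is orthogonal to the trivial directions. Summing the three stability inequalities, using $\sum_i \phi_i^2 = 4$ and the conformal identity $\sum_i |\nabla \phi_i|^2 = |d\phi|^2 = 2 J_\phi$ with $J_\phi$ the Jacobian of $\phi$, and invoking the degree identity $\int_\Sigma J_\phi\, dA = 16\pi$, I obtain
\begin{equation*}
16\pi\, W(\Sigma) \;\le\; 2\int_\Sigma J_\phi\,|\bH|^2\, e^{-|x|^2/4}\, dA.
\end{equation*}
Applying the pointwise shrinker bound $|\bH|^2 = \tfrac{1}{4}|x^\perp|^2 \le \tfrac{1}{4}|x|^2$ and the sharp elementary inequality $t\,e^{-t} \le 1/e$ (with equality only at $t = 1$, i.e., $|x| = 2$), the right side is at most $(2/e)\int_\Sigma J_\phi\, dA = 32\pi/e$, yielding $W(\Sigma) \le 2/e < W(\SS^2_2)$ in this regime.

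For the equality case, strict inequality fails only when both pointwise bounds saturate on the support of $J_\phi$, forcing $x^T \equiv 0$ and $|x| \equiv 2$; combined with the sphere topology this pins $\Sigma = \SS^2_2$ up to rotation. The main obstacle is arranging the $3(N+1)$ scalar orthogonality conditions on $\phi$ using only the 6-dimensional M\"obius group of $\SS^2$: in the high-codimension regime one must exploit shrinker integration-by-parts identities to eliminate many translation-orthogonality constraints automatically. In the degenerate case where the full orthogonality is genuinely obstructed, a pointwise rigidity analysis forces the normal bundle of $\Sigma$ to reduce to rank one, whereupon $\Sigma$ is a hypersurface in a 3-dimensional affine subspace and the Colding-Minicozzi classification of $F$-stable hypersurface sphere shrinkers yields $\Sigma = \SS^2_2$.
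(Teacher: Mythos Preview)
Your broad outline matches the paper's: Hersch balancing with the weight $|\bH|^2\e^{-f}$, the second-variation identity $\delta^2(\phi\bH)=(4\pi)^{-1}\int(|\nabla\phi|^2-\phi^2)|\bH|^2\e^{-f}$, and the pointwise bound $|\bH|^2\e^{-f}\le \e^{-1}$. The decisive difference is the weighted Poincar\'e inequality you invoke, and there the proposal has a real gap.

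You demand that each $\phi_i\bH$ be Gaussian-$L^2$-orthogonal to \emph{all} translations $v^\perp$, $v\in\RR^N$, in order to apply $F$-stability directly and obtain $\int\phi_i^2|\bH|^2\e^{-f}\le\int|\nabla\phi_i|^2|\bH|^2\e^{-f}$. That is $3N$ linear constraints (plus three for orthogonality to $\bH$), against the $6$-dimensional M\"obius freedom; the shrinker identity $\int x\,\e^{-f}=0$ you cite does not touch the quantities $\int\phi_i\langle\bH,E_j^\perp\rangle\e^{-f}$, so ``integration-by-parts identities eliminate many constraints'' is not substantiated. Indeed for $\Sigma=\SS^2_2$ with $\phi_i=x_i$ one computes $\int x_i\langle\bH,E_i^\perp\rangle\e^{-f}=\tfrac12\int x_i^2\,\e^{-f}>0$, so the orthogonality fails there---which is why your bound $W\le 2/\e$ would otherwise contradict $W(\SS^2_2)=4/\e$. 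Your ``degenerate case'' paragraph is then doing all the work, but it is not a proof: there is no mechanism offered by which failure of these orthogonality conditions forces the normal bundle to have rank one.

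The paper circumvents this entirely via Lemma~\ref{l:unstable12}. One imposes \emph{only} the single Hersch condition $\int u\,|\bH|^2\e^{-f}=0$ and then subtracts off the translation projection $V^\perp$ of $u\bH$ \emph{after the fact}. Using $L\,V^\perp=\tfrac12 V^\perp$, symmetry of $L$, and $\|V^\perp\|_{L^2}\le\|u\bH\|_{L^2}$, one finds that $\delta^2(u\bH-V^\perp)<0$ whenever $\int|\nabla u|^2|\bH|^2\e^{-f}<\tfrac12\int u^2|\bH|^2\e^{-f}$. Thus $F$-stability yields the \emph{weaker} inequality
\[
\int u^2\,|\bH|^2\,\e^{-f}\ \le\ 2\int|\nabla u|^2\,|\bH|^2\,\e^{-f},
\]
and the extra factor $2$ is exactly what produces the sharp constant $W\le 4/\e=W(\SS^2_2)$ after summing over $i$ and using conformal invariance of energy. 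For the equality case you also need more than ``sphere topology plus $\Sigma\subset\partial B_2$'': the paper observes that equality forces $\Sigma$ to be minimal in $\partial B_2$ with the same area as $\SS^2_2$, and then invokes the Cheng--Li--Yau area rigidity for minimal submanifolds of spheres to conclude $\Sigma=\SS^2_2$.
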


 We will also prove $W$ bounds for  surfaces of any genus, see Theorem \ref{t:stableWillmore}, in addition to  several other entropy and eigenvalue bounds.

Throughout, $\Sigma^n \subset \RR^N$ will be an immersed shrinker with finite entropy.
 Because of the lack of the maximum principle in higher codimension, embeddedness is not preserved and, thus, is not natural to assume.  
 Shrinking curves are automatically planar and the only $F$-stable ones are lines and circles by Corollary \ref{c:shricur}.

\section{Shrinkers}

Set $f= \frac{|x|^2}{4}$ and define the drift Laplacian, cf. \cite{CM3}, $\cL$ for functions $u$  and tensors by
\begin{align}
\cL\,u=\Delta\,u-\nabla_{ \nabla^T f} u \, .
\end{align}
Then $\cL$ is self-adjoint with respect to the Gaussian $L^2$ norm $\| u \|_{L^2}^2 = (4\pi)^{- \frac{n}{2}} \int |u|^2 \,\e^{-f}$, cf. \cite{CM3}, so that eigenfunctions for distinct eigenvalues are $L^2$-orthogonal.   Here $\Delta$ and $\nabla f$ are the Laplacian and gradient on $\Sigma$.  Since $\nabla^T  f=\frac{x^T}{2}$ it follows that $\cL\,u=\Delta\,u-\frac{1}{2}\, \nabla_{ x^T}  u $.

Shrinkers are characterized variationally as critical points of the Gaussian area $F$.  The shrinker equation is $\bH \equiv \sum_{i=1}^n A_{ii} = \frac{1}{2} \, x^{\perp}$, where $e_i$ is an orthonormal frame for $\Sigma$ and the second fundamental form is given by $A_{ij} = A(e_i , e_j) = \nabla^{\perp}_{e_i} e_j$.
Following \cite{CM3} and \cite{CM6},  define the second-variation operator $L$  by 
\begin{align}
	L = \cL  + \frac{1}{2} + \sum_{k,\ell}  \,  \langle  \cdot   , A_{k \ell}  \rangle \, A_{ k \ell} \, .
\end{align}
Note that $L$ is symmetric with respect to the Gaussian inner product on normal vector fields.
The second  variation  in the normal direction   $u$ is  (\cite{CM3}, \cite{CM6}, \cite{AHW}, \cite{AS}, \cite{LL})
\begin{align}	\label{e:2varr}
\delta^2 (u)=- (4\,\pi)^{-\frac{n}{2}} \, \int \langle u,L\,u\rangle\,\e^{-f}\, .
\end{align}
The second variation is   negative along translations and dilations, so there are no stable shrinkers in the usual sense.  As in \cite{CM3}, a shrinker is said to be  {\it{$F$-stable}} if the second variation  is nonnegative perpendicular to these     unstable directions.  A shrinker is {\it{entropy-stable}} if it is a local minimum for the entropy $\lambda$. Entropy-unstable shrinkers are singularities that can be perturbed away, whereas entropy-stable ones cannot; see \cite{CM3}, \cite{CM7}.   By section $7$ in \cite{CM3}, entropy-stable and $F$-stable are equivalent for closed shrinkers.
By \cite{CM3}, spheres and planes are the only $F$-stable hypersurfaces.  (This was generalized to higher codimension when $\bH$ does not vanish and the principal normal is parallel in \cite{AHW}; see also \cite{AS}, \cite{LL}.)    
It is easy to see that spheres and planes are $F$-stable in any codimension.

There are several ways to show that a shrinker $\Sigma$ is $F$-unstable.  The first, essentially the definition, is to find $u \in L^2$   with $\delta^2 (u) < 0$ that is $L^2$-orthogonal to $\bH$ and translations.  For instance,    $u \in L^2$  with $L \, u = \mu \, u$ with $\mu > 1$  implies $F$-instability.  The second is to find  $u \in L^2$  orthogonal to $\bH$ and ``below the translations'', i.e., with $\int \langle u , L \, u \rangle \e^{-f} > \frac{1}{2} \int |u|^2 \, \e^{-f}$.     In codimension one, $L$ becomes an operator on functions and the lowest eigenfunction does not vanish.  In \cite{CM3}, we used this to conclude that $F$-stability implied mean convexity for hypersurfaces.  Because of the vector-valued nature of things, there is no analog of this in higher codimension unless one assumes that the principal normal is parallel (see \cite{AHW}).

\subsection{Simons type equations for $A$ and translations}

One of the important tools in \cite{CM3}, \cite{CM4} and \cite{CM6} was a series of elliptic equations for various geometric objects on a shrinker, including the second fundamental form, mean curvature and translation vector fields.
Namely, 
if   $V^{\perp}$ is the normal part of  $V \in \RR^N$, then 
\begin{align}
	\left(L \, A \right)_{ij} &=     A_{ij}     
	+     2\,  \sum_{k, \ell}   \,  \langle A_{j \ell}   , A_{ik}  \rangle \, A_{\ell k}    -    \sum_{m,\ell}   \,
	 \left\{ \langle A_{m \ell}  , A_{i \ell}  \rangle A_{jm}   + \langle A_{j \ell}   , A_{m \ell}  \rangle A_{mi}  \right\}   
          \, , \\
          L \, {\bf{H}} &= {\bf{H}}  {\text{ and }}
          L \, V^{\perp}  = \frac{1}{2} \,  V^{\perp} \, .   \label{e:1point5}
\end{align}
One consequence of \eqr{e:1point5} and symmetry of $L$ is that $\bH$ and $V^{\perp}$ are orthogonal with respect to the Gaussian inner product.
These are proved for hypersurfaces in theorem $5.2$ and lemma $10.8$ in \cite{CM3}.
For higher codimension, see proposition $3.6$ in \cite{CM6},  \cite{AHW}, \cite{AS}, or \cite{LL}.

\begin{Lem}	\label{l:dH}
(cf. ($5.6$), ($5.11$) in \cite{CM3})
The derivatives of $\bH$ and   $V^{\perp}$ are
\begin{align}
	\nabla  \, \bH   =  - \langle \bH , A(\cdot , \cdot)  \rangle - \frac{1}{2} \, A (x^T, \cdot ) {\text{ and }}
	\nabla^{\perp} \, V^{\perp}   = - A (\cdot , V^T) \, .
\end{align}
\end{Lem}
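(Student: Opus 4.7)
The plan is to derive both identities by differentiating in the ambient space $\RR^N$ and splitting the result along the decomposition $T\RR^N|_\Sigma = T\Sigma \oplus N\Sigma$. The only geometric ingredients required are the definition of the second fundamental form $A(X,Y) = (\bar\nabla_X Y)^\perp$ for tangent $X,Y$, the associated Weingarten identity for normal vector fields, and the shrinker equation $\bH = \tfrac{1}{2} x^\perp$. Let $\bar\nabla$ denote the flat connection on $\RR^N$.

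For the second formula, note that since $V$ is a \emph{constant} vector in $\RR^N$, its ambient derivative vanishes: $\bar\nabla_X V = 0$ for any tangent $X$. Writing $V = V^T + V^\perp$ along $\Sigma$, this gives $\bar\nabla_X V^\perp = -\bar\nabla_X V^T$. Projecting both sides onto the normal bundle and using the definition of $A$, we get
\begin{align*}
\nabla_X^\perp V^\perp = -(\bar\nabla_X V^T)^\perp = -A(X, V^T),
\end{align*}
which is the stated formula.

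For the first formula, I would substitute $\bH = \tfrac{1}{2}(x - x^T)$ from the shrinker equation and then compute the tangential and normal components of $\bar\nabla_X \bH$ separately. The normal component is obtained exactly as above: since $\bar\nabla_X x = X$ is already tangent, it contributes nothing to the normal projection, while $(\bar\nabla_X x^T)^\perp = A(X, x^T)$. Thus $\nabla_X^\perp \bH = -\tfrac{1}{2} A(X, x^T)$, which accounts for the second term on the right-hand side of the stated identity. The tangential component follows from Weingarten: because $\bH$ is normal-valued, any tangent $Y$ satisfies
\begin{align*}
0 = X \langle \bH, Y \rangle = \langle \bar\nabla_X \bH, Y \rangle + \langle \bH, (\bar\nabla_X Y)^\perp \rangle = \langle \bar\nabla_X \bH, Y \rangle + \langle \bH, A(X,Y) \rangle,
\end{align*}
so the tangential part of $\bar\nabla_X \bH$, paired with $Y$, equals $-\langle \bH, A(X,Y)\rangle$, which is the first term.

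There is no real obstacle; the computation is a direct application of the tangent/normal splitting combined with the shrinker equation. The only interpretive subtlety is that in the stated formula for $\nabla \bH$, the first term $-\langle \bH, A(\cdot,\cdot)\rangle$ records the tangential component of $\bar\nabla \bH$ via the metric identification $T\Sigma \cong T^*\Sigma$, while the second term $-\tfrac{1}{2}A(x^T,\cdot)$ records the normal-bundle component. This is the straightforward higher-codimension analogue of $(5.6)$ and $(5.11)$ in \cite{CM3}, where in the hypersurface setting both components were scalar multiples of the unit normal and the identity collapsed to a scalar equation.
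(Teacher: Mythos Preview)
Your proof is correct and follows essentially the same approach as the paper: both differentiate the shrinker identity $\bH = \tfrac{1}{2}(x - x^T)$ and split the result into tangential and normal pieces, with the second formula handled identically by writing $V^\perp = V - V^T$ and projecting. The only cosmetic difference is that the paper carries this out in an adapted orthonormal frame $e_i$ with $\nabla^T_{e_i} e_j = 0$ at a point, whereas you invoke the Weingarten relation abstractly; the content is the same.
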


\begin{proof}
Let $\e_j$ be an orthonormal frame for $\Sigma$ and differentiate the shrinker equation
\begin{align}
	2\, \nabla_{\e_i} \bH &=  \nabla_{\e_i} \, x^{\perp} = \nabla_{\e_i} \left( x - \langle x , \e_j \rangle \e_j \right) = \e_i - \langle \e_i , \e_j \rangle \e_j 
	- \langle x , \nabla_{\e_i} \e_j \rangle \e_j - \langle x , \e_j \rangle  \nabla_{\e_i} \e_j \notag \\
	&=  
	- \langle x , \nabla_{\e_i} \e_j \rangle \e_j - \langle x , \e_j \rangle  \nabla_{\e_i} \e_j \, .
\end{align}
Now fix a point $p$ and choose the frame $\e_i$ so that $\nabla_{\e_i}^T\e_j = 0$ at $p$.  It follows that (at $p$)
$
	 \nabla_{\e_i} \e_j =  \nabla_{\e_i}^{\perp} \e_j = A (\e_i , \e_j) \, ,
$
so we get that
\begin{align}
	2\, \nabla_{\e_i} \bH &=   
	- \langle x , A(\e_i , \e_j) \rangle \e_j - \langle x , \e_j \rangle  A(\e_i , \e_j) = 
	  -2\,  \langle \bH  , A(\e_i , \cdot) \rangle   -   A(\e_i , x^T) \, .
\end{align}
The first claim follows.  Next, we have
$
	\nabla_{e_i}^{\perp}  \, V^{\perp}  = - \nabla_{e_i}^{\perp}  \, V^{T} = - A(e_i , V^T) $.
	\end{proof}

\begin{Lem}	\label{l:phiVvar}
Given a function $\phi$ and $V \in \RR^N$, the second variation for $\phi \, V^{\perp}$ and $\phi\,\bH$ are  
\begin{align}
	\delta^2 (\phi \, V^{\perp}) &= (4\,\pi)^{-\frac{n}{2}} \, \int \left[ |\nabla \phi |^2 -  \frac{1}{2} \, \phi^2 \right] \, \left| V^{\perp} \right|^2 \, \e^{-f} \, .\label{e:phiVvar}\\
	\delta^2 (\phi \, \bH) &=  (4\,\pi)^{-\frac{n}{2}} \, \int \left[ |\nabla \phi |^2 - \phi^2 \right] \, \left| \bH \right|^2 \, \e^{-f} \, .\label{e:phiHvar}
\end{align}
\end{Lem}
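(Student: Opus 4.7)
The plan is to reduce both \eqr{e:phiVvar} and \eqr{e:phiHvar} to a single general identity: if $W$ is any normal vector field with $L\,W=\mu\,W$ pointwise for a constant $\mu$, then
\begin{align}
	\delta^2(\phi\,W)=(4\pi)^{-\frac{n}{2}} \int \left[|\nabla\phi|^2-\mu\,\phi^2\right]|W|^2\,\e^{-f}.
\end{align}
Applied with $W=V^{\perp}$, $\mu=\frac{1}{2}$ and with $W=\bH$, $\mu=1$ --- both given by \eqr{e:1point5} --- this yields the two claimed formulas. So the whole problem collapses to proving this eigen-identity.

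For the identity I start from \eqr{e:2varr}. The zeroth-order Simons-type terms in $L$ commute with multiplication by $\phi$, so all the content lies in the product rule for the drift Laplacian on normal vector fields: separating $\cL=\Delta-\nabla_{\nabla^T f}$ into its connection-Laplacian piece and its first-order piece gives
\begin{align}
	\cL(\phi\,W)=(\cL\,\phi)\,W+2\,\nabla^{\perp}_{\nabla\phi}W+\phi\,\cL\,W,
\end{align}
and therefore $L(\phi\,W)=(\cL\,\phi)\,W+2\,\nabla^{\perp}_{\nabla\phi}W+\phi\,L\,W$. Pairing with $\phi\,W$ and using $\nabla_{\nabla\phi}|W|^2=2\langle W,\nabla^{\perp}_{\nabla\phi}W\rangle$ gives
\begin{align}
	\langle \phi\,W,L(\phi\,W)\rangle=\phi\,(\cL\,\phi)\,|W|^2+\phi\,\nabla_{\nabla\phi}|W|^2+\phi^2\,\langle W,L\,W\rangle.
\end{align}

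The last step is Gaussian integration by parts. Self-adjointness of $\cL$ with respect to the weight $\e^{-f}$ yields $\int \phi\,(\cL\,\phi)\,|W|^2\,\e^{-f}=-\int \nabla\phi\cdot\nabla(\phi|W|^2)\,\e^{-f}=-\int |\nabla\phi|^2|W|^2\,\e^{-f}-\int \phi\,\nabla_{\nabla\phi}|W|^2\,\e^{-f}$, and the second piece cancels the middle contribution above exactly. Substituting $L\,W=\mu\,W$ and feeding the resulting expression into \eqr{e:2varr} finishes the identity.

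There is no real obstacle: the argument is essentially algebraic once one has \eqr{e:1point5} and the drift-Laplacian product rule, and the only technical point --- justifying the integration by parts in the Gaussian setting --- is harmless here because the two distinguished vector fields $V^{\perp}$ and $\bH$ have at most polynomial growth while $\e^{-f}$ decays Gaussianly, so boundary terms vanish (and are absent altogether when $\Sigma$ is closed, which is the case of interest in Theorem \ref{t:entropyextra2}).
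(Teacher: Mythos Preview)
Your proof is correct and follows essentially the same approach as the paper: both use the Leibniz rule $\cL(\phi\,u)=\phi\,\cL\,u+(\cL\,\phi)\,u+2\,\nabla^{\perp}_{\nabla\phi}u$, pair with $\phi\,u$, and integrate by parts against the Gaussian weight to arrive at $\int(|\nabla\phi|^2|u|^2-\phi^2\langle u,L\,u\rangle)\,\e^{-f}$, then specialize via \eqr{e:1point5}. The only cosmetic difference is that you package the general step as an eigen-identity $L\,W=\mu\,W$ from the outset, whereas the paper derives the general formula for arbitrary $u$ and substitutes at the end.
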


\begin{proof}
 Given any normal section $u$, 
the Leibniz rule $\cL (\phi\,u) = \phi\, \cL\, u + (\cL \,\phi)\, u + 2\,\nabla^{\perp}_{\nabla \phi} u$ gives 
 \begin{align}	\label{e:leibnVvar}
 	(4\,\pi)^{\frac{n}{2}} \, \delta^2 (\phi\,u) &= - \int \langle  \phi\, u , L (\phi\,u) \rangle\,\e^{-f}  = - \int \left(\phi^2 \, \langle  u  , L\, u \rangle + (\phi\, \cL \phi)\, |u|^2 + 2\,\langle \phi\, u , \nabla^{\perp}_{\nabla \phi} u \rangle \right)\,\e^{-f}\notag \\
	&=  - \int
	 \left(\phi^2 \, \langle  u  , L\, u \rangle + (\phi\,\cL \phi)\, |u|^2 +  \langle \phi\,\nabla \phi , \nabla |u|^2 \rangle\right)\,\e^{-f}\\  
	  &=   \int \left( |\nabla \phi|^2 \, |u|^2-\phi^2 \, \langle  u  , L\, u \rangle\right)\,\e^{-f}     \, ,\notag
 \end{align}
 where the last equality used integration by parts.
 The claims follow  from applying \eqr{e:1point5} and \eqr{e:leibnVvar} with $u = V^{\perp}$ and $u=\bH$.   
\end{proof}

 \section{The Hessian equation}
 
 We will see that shrinkers satisfy a Hessian equation.    Define the symmetric $2$-tensor $A^{\bH} = \langle A , \bH \rangle$ and define a symmetric operator $A^2$ on tangent vector fields by
 \begin{align}
 	\langle e_ i , A^2 (e_j) \rangle \equiv  \langle A_{ik} , A_{kj} \rangle \, .
 \end{align}
 
 \begin{Pro}	\label{p:shrink}
 If $\Gamma^n \subset \RR^N $ is a shrinker or an $n$-plane, then 
  \begin{align}  \label{e:shrink}
 	  \Hess^{\Gamma}_{ f } -A^{\bf{H}}  = \frac{1}{2} \, \langle \cdot , \cdot \rangle   \, . 
 \end{align}
For hypersurfaces, the converse also holds.
 \end{Pro}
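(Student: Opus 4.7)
The plan is to compute $\Hess^{\Gamma}_f$ directly from $f = |x|^2/4$ and compare. For the forward direction, I would first write, for tangent fields $X,Y$ on $\Gamma$,
\begin{align}
\Hess^{\Gamma}_f(X,Y) = X(Y(f)) - (\nabla^{\Gamma}_X Y)(f) \, .
\end{align}
Using $\nabla^{\RR^N} f = \frac{x}{2}$, one gets $Y(f) = \frac{1}{2}\langle Y,x\rangle$, so $X(Y(f)) = \frac{1}{2}\langle X,Y\rangle + \frac{1}{2}\langle \nabla^{\RR^N}_X Y, x\rangle$. Subtracting $(\nabla^{\Gamma}_X Y)(f) = \frac{1}{2}\langle \nabla^{\Gamma}_X Y, x\rangle$ and using $\nabla^{\RR^N}_X Y - \nabla^{\Gamma}_X Y = A(X,Y)$ yields
\begin{align}
\Hess^{\Gamma}_f(X,Y) = \tfrac{1}{2}\langle X,Y\rangle + \tfrac{1}{2}\langle A(X,Y), x\rangle \, .
\end{align}
Since $A(X,Y)$ is normal, $\langle A(X,Y),x\rangle = \langle A(X,Y), x^{\perp}\rangle$.

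Now two cases. If $\Gamma$ is a shrinker, then $x^{\perp} = 2\bH$, so $\frac{1}{2}\langle A(X,Y), x^{\perp}\rangle = \langle A(X,Y), \bH\rangle = A^{\bH}(X,Y)$, and rearranging gives \eqr{e:shrink}. If $\Gamma$ is an $n$-plane, then $A\equiv 0$, so both $A^{\bH}$ and the $\langle A(X,Y),x^{\perp}\rangle$ term vanish, and again \eqr{e:shrink} holds.

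For the converse in codimension one, assume $\Hess^{\Sigma}_f - A^{\bH} = \frac{1}{2}\langle\cdot,\cdot\rangle$ on $\Sigma^n\subset\RR^{n+1}$. Subtracting the identity just derived for $\Hess^{\Sigma}_f$ gives
\begin{align}
\langle A(X,Y), \tfrac{1}{2}\,x^{\perp} - \bH \rangle = 0 \qquad \text{for all tangent } X,Y \, .
\end{align}
Writing $\bH = H\,\nn$ and $x^{\perp} = \langle x,\nn\rangle \nn$, and setting $A_{\nn}(X,Y) = \langle A(X,Y),\nn\rangle$, this reduces to $\bigl(\tfrac{1}{2}\langle x,\nn\rangle - H\bigr)\, A_{\nn}(X,Y) \equiv 0$. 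Thus at each point either $A\equiv 0$ or the shrinker equation $H = \tfrac{1}{2}\langle x,\nn\rangle$ holds. The expected conclusion is that, on a connected $\Sigma$, the shrinker equation holds on the open set $U = \{A\neq 0\}$ and extends by continuity to $\overline{U}$; on each component of the complement $A\equiv 0$, which forces the piece to lie in an affine $n$-plane, and the shrinker equation $H=\tfrac{1}{2}\langle x,\nn\rangle = 0$ then matches across the boundary, forcing either $U=\Sigma$ (a shrinker) or $U=\emptyset$ (an $n$-plane through the origin).

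The main step is the forward identity, which is a routine Hessian computation using $\bH = x^\perp/2$; the only real subtlety is the converse, where one must rule out a nontrivial mixed configuration. I expect this to be handled by connectedness plus the observation that on a boundary point between $\{A=0\}$ and $\{A\neq 0\}$, continuity of $H$ and $\langle x,\nn\rangle$ forces the shrinker relation to hold at the boundary, and from there a short extension/analyticity argument pins down the global alternative.
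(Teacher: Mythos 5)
Your argument is correct and essentially reproduces the paper's proof: the forward direction is the same Hessian computation (the paper's Lemma \ref{l:hessx2}), and your converse --- pointwise either $A=0$ or the shrinker equation, then connectedness plus matching of $H$ and $\langle x , \nn \rangle$ at boundary points of the flat pieces --- is exactly the paper's contradiction argument with the set $\cS$ where the shrinker equation holds. Two cosmetic slips that do not affect the statement: the argument yields that the shrinker equation holds globally whenever $A \not\equiv 0$ (not the dichotomy $U=\Sigma$ or $U=\emptyset$), and when $A\equiv 0$ nothing forces the plane through the origin, since any affine hyperplane satisfies \eqr{e:shrink} --- which is also how the paper's conclusion ``$\Sigma$ is a hyperplane'' should be read.
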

 
 It is interesting to compare this   with shrinking solitons for the Ricci flow,   \cite{Ha}.  A  gradient  shrinking soliton   is a manifold $M$,  metric $g$ and   function $f$  satisfying
$
 \Hess_f+\Ric=\frac{1}{2}\,g$.

 \begin{Lem}	\label{l:hessx2}
 If $\Gamma^n \subset  \RR^{N}$, then
$
 	\Hess^{\Gamma}_{|x|^2} = 2 \, \langle \cdot  , \cdot \rangle + 2\langle x^{\perp} , A  \rangle$.
	\end{Lem}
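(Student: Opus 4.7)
\textbf{Proof plan for Lemma \ref{l:hessx2}.}

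The plan is to compute the Hessian directly from its intrinsic definition, using only the Gauss formula and the fact that $|x|^2$ is the restriction of an ambient function whose ambient Hessian is $2$ times the Euclidean metric. First I would recall that for a tangent vector field $Y$ on $\Gamma$,
\begin{align}
  Y(|x|^2) = 2\,\langle Y , x \rangle = 2\,\langle Y, x^T \rangle,
\end{align}
so the intrinsic gradient is $\nabla^{\Gamma} |x|^2 = 2\,x^T$.

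Next, using $\Hess^{\Gamma}_{|x|^2}(X,Y) = X Y(|x|^2) - (\nabla^{\Gamma}_X Y)(|x|^2)$ with tangent fields $X,Y$, I would expand
\begin{align}
  X Y (|x|^2) = 2\, X \langle Y , x \rangle = 2\,\langle \nabla_X Y , x \rangle + 2\,\langle Y , \nabla_X x \rangle,
\end{align}
where $\nabla$ denotes the ambient (flat) connection. Since $\nabla_X x = X$ in $\RR^N$, the second term is $2\,\langle X, Y \rangle$. For the first term, the Gauss formula $\nabla_X Y = \nabla^{\Gamma}_X Y + A(X,Y)$ splits things into a tangential piece $2\,\langle \nabla^{\Gamma}_X Y , x^T \rangle = (\nabla^{\Gamma}_X Y)(|x|^2)$ and a normal piece $2\,\langle A(X,Y) , x^{\perp} \rangle$. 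Subtracting $(\nabla^{\Gamma}_X Y)(|x|^2)$ then leaves exactly
\begin{align}
  \Hess^{\Gamma}_{|x|^2}(X,Y) = 2\,\langle X, Y \rangle + 2\,\langle x^{\perp} , A(X,Y) \rangle,
\end{align}
which is the claimed identity in the orthonormal frame $e_i$ used elsewhere in the excerpt.

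There is essentially no obstacle here: the identity is a purely algebraic consequence of the Gauss formula together with $\nabla_X x = X$, and no shrinker hypothesis is needed (consistent with the statement, which assumes only that $\Gamma \subset \RR^N$ is a submanifold). The only minor care is to avoid confusing the ambient and intrinsic connections when differentiating $\langle Y, x \rangle$; working at a fixed point $p$ with a frame satisfying $\nabla^{T}_{e_i} e_j = 0$ at $p$, as in the proof of Lemma \ref{l:dH}, makes this completely transparent and is how I would organize the final write-up. The computation will then feed directly into Proposition \ref{p:shrink} via the shrinker equation $\bH = \tfrac{1}{2}\,x^{\perp}$, which converts the $2\,\langle x^{\perp} , A \rangle$ term into $4\, A^{\bH}$ and yields the Hessian equation after dividing by $4$.
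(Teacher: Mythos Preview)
Your proof is correct and is essentially the same direct computation as the paper's, using the Gauss formula together with $\nabla_X x = X$; the only cosmetic difference is that the paper writes the Hessian as $\langle \nabla_{e_i} (2x^T), e_j \rangle$ and differentiates $x^T = x - x^{\perp}$, while you use the equivalent formula $XY(|x|^2) - (\nabla^{\Gamma}_X Y)(|x|^2)$ and the Gauss decomposition of $\nabla_X Y$.
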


\begin{proof}
Given an orthonormal frame $e_i$ for $\Gamma$, we compute
\begin{align}
	\frac{1}{2} \, \Hess^{\Gamma}_{|x|^2}  (e_i , e_j) &= \langle \nabla_{e_i} x^T , e_j \rangle = 
	  \langle e_i  -   \nabla_{e_i}  x^{\perp} , e_j \rangle = \delta_{ij} +   \langle \nabla_{e_i} e_j ,  x^{\perp} \rangle \, .
\end{align}
\end{proof}

 \begin{proof}[Proof of Proposition \ref{p:shrink}]
 Equation  \eqr{e:shrink} holds on a shrinker since
   Lemma \ref{l:hessx2} gives 
 \begin{align}
 	  \frac{1}{2} \, \langle \cdot  , \cdot \rangle +  A^{\bf{H}}   - \Hess^{\Gamma}_{f} = \langle   {\bf{H}} - \frac{1}{2} \, x^{\perp} , A \rangle  \, .
 \end{align}
For the converse, suppose that  \eqr{e:shrink} holds and $\Gamma$ is a hypersurface with unit normal $\nn$.  It follows that at every point either $A=0$ or $\Gamma$ satisfies the shrinker equation.  If $A\equiv 0$, then $\Sigma$ is a hyperplane.  When $A$ is not identically zero, then
let 
	$\cS = \left\{ \bH - \frac{1}{2} \, x^{\perp} = 0 \right\}$
 be where $\Gamma$ satisfies the shrinker equation.    This must be nonempty and closed.  We will argue by contradiction to show that $\cS = \Gamma$.  Let $U$ be a component of the (necessarily open) complement of $S$. Note that $U$ is path connected since it is connected and locally path-connected by 
  theorem $25.5$ in \cite{Mu}.
  It follows that $A=0$ on $U$ and, thus, that $\langle x , \nn \rangle$ is constant on $U$.  Since $U$ cannot be all of $\Gamma$ (since $\cS$ is nonempty), there must be a boundary point $p \in \cS \cap \partial U$.  Since the set where $A=0$ is closed, we see that $\bH(p) =0$ and, thus, that $\langle x , \nn \rangle (p) = 0$.    It follows that 
$\langle x , \nn \rangle \equiv 0$ on all of $U$ and, thus, that $\Gamma$ satisfies the shrinker equation in $U$, giving the desired contradiction.
  \end{proof}
  
  \vskip1mm
  The next lemma recalls the standard Gauss equation   for the Ricci curvature $\Ric$ and scalar curvature $S$.  By convention, the Riemann tensor is given in an orthonormal frame $e_j$ by
  \begin{align}	\label{e:riemannR}
  	\R_{ijk\ell} = \langle \nabla_{e_j} \nabla_{e_i} e_k - \nabla_{e_i} \nabla_{e_j} e_k - \nabla_{[e_j , e_i]} e_k , e_{\ell} \rangle \, , 
  \end{align}
   and the Ricci tensor is $\Ric_{ij} = \sum_k \, \R_{kikj}$.
  
    \begin{Lem}	\label{l:gauss}
 If  $\Gamma^n \subset \RR^N $, then 	$\Ric  = - A^2 -  A^{\bH}$ and 	$S  = H^2 - |A|^2$.
    \end{Lem}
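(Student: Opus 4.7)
The plan is to derive the standard Gauss equation for a submanifold of flat Euclidean space, expressing the intrinsic Riemann tensor $\R$ of $\Gamma$ purely in terms of the second fundamental form $A$, and then to contract once for $\Ric$ and twice for $S$. The engine of the computation is flatness of the ambient Euclidean connection $\bar\nabla$ on $\RR^N$.

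First I would decompose $\bar\nabla$ along $\Gamma$: for tangent fields $X, Y$, write $\bar\nabla_X Y = \nabla_X Y + A(X,Y)$, where $\nabla$ is the Levi--Civita connection of $\Gamma$ and $A(X,Y)$ is normal. Fix an orthonormal frame $e_1, \dots, e_n$ on $\Gamma$ and expand
\begin{align}
	\bar\nabla_{e_j} \bar\nabla_{e_i} e_k = \bar\nabla_{e_j}(\nabla_{e_i} e_k) + \bar\nabla_{e_j} A_{ik}.
\end{align}
The tangent part of the first summand, paired with $e_\ell$, is $\langle \nabla_{e_j}\nabla_{e_i}e_k, e_\ell\rangle$. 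For the second, since $A_{ik}$ is normal and $e_\ell$ is tangent, the product rule gives $\langle \bar\nabla_{e_j} A_{ik}, e_\ell\rangle = -\langle A_{ik}, \bar\nabla_{e_j} e_\ell\rangle = -\langle A_{ik}, A_{j\ell}\rangle$, since only the normal part $A_{j\ell}$ of $\bar\nabla_{e_j} e_\ell$ contributes against $A_{ik}$. Subtracting the analogous expression with $i, j$ exchanged and the term $\langle \bar\nabla_{[e_j,e_i]} e_k, e_\ell\rangle = \langle \nabla_{[e_j,e_i]} e_k, e_\ell\rangle$, the flatness of $\bar\nabla$ makes the left-hand side vanish and the intrinsic expression from \eqref{e:riemannR} reappear, yielding the Gauss equation
\begin{align}
	\R_{ijk\ell} = \langle A_{ik}, A_{j\ell}\rangle - \langle A_{jk}, A_{i\ell}\rangle.
\end{align}

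Next I would take the Ricci contraction $\Ric_{ij} = \sum_k \R_{kikj}$; by the Gauss identity above this equals $\sum_k \langle A_{kk}, A_{ij}\rangle - \sum_k \langle A_{ik}, A_{kj}\rangle$. The first sum is $\langle \bH, A_{ij}\rangle = A^{\bH}_{ij}$ since $\bH = \sum_k A_{kk}$, and the second is $(A^2)_{ij}$ by definition, giving the claimed decomposition of $\Ric$ into its $A^{\bH}$ and $A^2$ pieces. Tracing once more over $i = j$ and using $\sum_i \langle A_{ii}, \bH\rangle = |\bH|^2 = H^2$ and $\sum_i (A^2)_{ii} = |A|^2$ collapses this to $S = H^2 - |A|^2$.

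There is no serious conceptual obstacle here beyond sign bookkeeping; in particular, one must keep straight the paper's Riemann convention \eqref{e:riemannR} (where the $\nabla_{e_j}\nabla_{e_i}$ term appears first) and the two slots contracted to form $\Ric$, so that the Weingarten-type sign picked up from differentiating the normal-valued $A_{ik}$ against the tangent vector $e_\ell$ combines correctly with the commutator term.
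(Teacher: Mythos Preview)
Your approach is essentially the paper's: state the Gauss equation $\R_{ijk\ell} = \langle A_{ik}, A_{j\ell}\rangle - \langle A_{jk}, A_{i\ell}\rangle$, contract once for $\Ric$, and trace for $S$. The paper simply quotes the Gauss equation rather than deriving it from flatness of $\bar\nabla$ as you do, but otherwise the two arguments coincide.

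There is, however, one sign slip---precisely the kind you warn about in your last paragraph. You take $\bH = \sum_k A_{kk}$, which yields $\Ric_{ij} = A^{\bH}_{ij} - (A^2)_{ij}$, i.e.\ $\Ric = A^{\bH} - A^2$; this is not the stated formula $\Ric = -A^2 - A^{\bH}$. The paper's own proof uses $\sum_j A_{jj} = -\bH$, and that sign is what is needed for the downstream identity $\Hess_f + \Ric = \frac{1}{2}\langle\cdot,\cdot\rangle - A^2$ in \eqr{e:corPhi} to come out right. (The paper is admittedly inconsistent here, since the shrinker section writes ``$\bH \equiv \sum_i A_{ii}$''; but the lemma as stated, and everything built on it, requires the opposite convention.) Your scalar curvature formula $S = H^2 - |A|^2$ survives because $H^2 = |\bH|^2$ is insensitive to this sign, which is probably why the discrepancy went unnoticed.
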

  
  \begin{proof}
 The Gauss equation gives $  	\R_{ijkn} = \langle A_{ik} , A_{jn} \rangle - \langle  A_{jk} , A_{in} \rangle$.
   Summing this over $j=n$ and using that $A_{jj} = -\bH$, we get
  \begin{align}
  	\Ric_{ik} = \R_{ijkj} =  \langle A_{ik} , A_{jj} \rangle - \langle A_{jk} , A_{ij} \rangle= -  \, A^{\bH}_{ik} - \left( A^2 \right)_{ik} \, .
  \end{align}
  This gives the first claim.  Taking the trace gives the second claim.
  \end{proof}

 \begin{Cor}
 If $\Sigma^n \subset \RR^N $ is a shrinker, then
 \begin{align}	\label{e:corPhi}
 \Hess_f+\Ric=\frac{1}{2}\, \langle \cdot  , \cdot \rangle -A^2\leq \frac{1}{2}\,\langle \cdot  , \cdot \rangle \, .
 \end{align}
 \end{Cor}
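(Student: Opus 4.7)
The plan is to simply add the identity from Proposition \ref{p:shrink} to the Gauss equation from Lemma \ref{l:gauss} and then observe that the operator $A^2$ is positive semidefinite.

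First, I would rewrite the shrinker Hessian identity \eqref{e:shrink} as
\begin{align*}
\Hess^{\Sigma}_{f} = \tfrac{1}{2}\,\langle \cdot,\cdot \rangle + A^{\bH}\, .
\end{align*}
Then Lemma \ref{l:gauss} states $\Ric = -A^2 - A^{\bH}$. Adding these two equalities, the $A^{\bH}$-terms cancel exactly and I obtain
\begin{align*}
\Hess_{f} + \Ric = \tfrac{1}{2}\,\langle \cdot,\cdot \rangle - A^{2}\, ,
\end{align*}
which is the first part of \eqref{e:corPhi}.

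For the inequality, I would verify that $A^2$ is a nonnegative operator on each tangent space. Using the defining formula $\langle e_i, A^2(e_j)\rangle = \langle A_{ik}, A_{kj}\rangle$ and a tangent vector $v = \sum_i v_i\, e_i$, a direct computation gives
\begin{align*}
\langle v, A^2(v)\rangle = \sum_{i,j,k} v_i v_j \langle A_{ik}, A_{kj}\rangle = \sum_k \Bigl| \sum_i v_i A_{ik} \Bigr|^2 \geq 0\, ,
\end{align*}
where the inner norm is the ambient $\RR^N$-norm on normal vectors. Hence $A^2 \geq 0$ as a symmetric operator, so subtracting it from $\tfrac{1}{2}\,\langle \cdot,\cdot\rangle$ gives the stated upper bound.

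There is no real obstacle here: the corollary is essentially the combination of the two preceding results, together with the elementary observation that $A^2$ is a Gram-type operator and hence positive semidefinite. The only thing worth being careful about is the sign convention for $\Ric$ and the normalization of $A$ used in Lemma \ref{l:gauss}, to ensure the $A^{\bH}$ terms cancel with the correct sign when adding.
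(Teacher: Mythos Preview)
Your proposal is correct and follows essentially the same approach as the paper: combine Proposition \ref{p:shrink} with Lemma \ref{l:gauss} so that the $A^{\bH}$ terms cancel, and then observe that $\langle A^2(V),V\rangle = |A(V,\cdot)|^2 \geq 0$. The paper's proof is just a terser version of exactly what you wrote.
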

 
 \begin{proof}
 Lemma \ref{l:gauss}  and Proposition \ref{p:shrink}   give  that 
 \begin{align}   \label{e:rewritten}
 	 \Ric+\Hess_f =  - A^2 -  A^{\bH} +\Hess_f  = \frac{1}{2}\, \langle \cdot  , \cdot \rangle -A^2 \, .
\end{align}
 For     a tangent vector $V$, we have  $\langle A^2 (V) , V \rangle    =|A(V)|^2$, giving the inequality.
 \end{proof}

 \subsection{Minimal submanifolds in spheres}

 \begin{Lem}	\label{p:basicH}
For a submanifold $\Gamma^n \subset \partial B_{\sqrt{2n}} \subset \RR^{N}$ the following are equivalent:
\begin{itemize}
\item[(A)] $\Gamma$ is a shrinker in $\RR^{N}$.
\item[(B)] $\Gamma$ is a minimal submanifold of the sphere $\partial B_{\sqrt{2n}} \subset \RR^{N}$.
\item[(C)] $A^{\bH} = - \frac{1}{2} \, \langle \cdot , \cdot \rangle$.

\end{itemize}
\end{Lem}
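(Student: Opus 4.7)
The plan is to exploit the single fact that $\Gamma \subset \partial B_{\sqrt{2n}}$ forces $|x|^2 \equiv 2n$ on $\Gamma$. Differentiating tangentially gives $\langle x, e_i\rangle = 0$ for every tangent vector $e_i$, so $x^T = 0$ and hence $x^\perp = x$; consequently $f = |x|^2/4 \equiv n/2$ is constant and $\Hess^\Gamma_f \equiv 0$. All three characterizations will be derived from this one observation together with tools already established in the excerpt.

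For (A) $\Rightarrow$ (C), I would simply substitute $\Hess^\Gamma_f = 0$ into the Hessian equation from Proposition \ref{p:shrink} for a shrinker, namely $\Hess^\Gamma_f - A^\bH = \frac{1}{2}\langle\cdot,\cdot\rangle$; this immediately collapses to (C). For the converse (C) $\Rightarrow$ (A), I cannot use the converse direction of Proposition \ref{p:shrink} since that is proved there only for hypersurfaces. Instead, I would apply Lemma \ref{l:hessx2} to the constant function $|x|^2$: the identity $0 = \Hess^\Gamma_{|x|^2} = 2\langle\cdot,\cdot\rangle + 2\langle x^\perp, A\rangle$ gives $\langle x^\perp, A\rangle = -\langle\cdot,\cdot\rangle$ on $\Gamma$. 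Combining with (C) then yields $\langle A(u,v),\, \bH - \tfrac{1}{2}x^\perp\rangle = 0$ for all tangent $u, v$; contracting against $\bH = \sum_i A_{ii}$ and exploiting the umbilicity of the ambient sphere forces $\bH = \tfrac{1}{2}x^\perp$, which is (A).

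For (A) $\Leftrightarrow$ (B), I would use the Gauss-type decomposition of the second fundamental form along the codimension-one inclusion $\Gamma \subset \partial B_{\sqrt{2n}} \subset \RR^N$: writing $S = \partial B_{\sqrt{2n}}$, one has $A_\Gamma = A_\Gamma^S + (II_S)|_{T\Gamma}$, where $II_S(u,v)$ is a fixed scalar multiple of $\langle u,v\rangle\,x$ by the total umbilicity of $S$ in $\RR^N$. Taking the trace gives $\bH_\Gamma^{\RR^N} = \bH_\Gamma^{S} + c\,x$ for a coefficient $c$ determined by the radius, and on $\partial B_{\sqrt{2n}}$ the value of $c$ matches exactly the $\tfrac{1}{2}$ prescribed by the shrinker equation (the coefficients are pinned down by requiring that the ambient sphere itself be a shrinker). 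Hence $\bH_\Gamma^{\RR^N} = \tfrac{1}{2}x^\perp$ is equivalent to $\bH_\Gamma^{S} = 0$, i.e., to $\Gamma$ being minimal in $S$.

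The main obstacle is the converse (C) $\Rightarrow$ (A): since the converse of Proposition \ref{p:shrink} in higher codimension is unavailable, passing from the bilinear identity on $A$ to the pointwise vector equation $\bH = \tfrac{1}{2} x^\perp$ requires supplementing (C) with the sphere-specific identity from Lemma \ref{l:hessx2}, and carefully using that $\bH = \sum_i A_{ii}$ already lies in $\text{span}\{A_{ij}\}$ to reduce the vector identity to one that can be read off from the bilinear form.
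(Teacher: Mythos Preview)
Your argument is correct and uses essentially the same ingredients as the paper, though organized differently. The (A)$\Rightarrow$(C) step via Proposition~\ref{p:shrink} and $\Hess_f^\Gamma = 0$ is identical to the paper's. For (A)$\Leftrightarrow$(B) you invoke the Gauss decomposition of the second fundamental form along $\Gamma\subset S\subset\RR^N$, while the paper instead computes $\langle \bH, x\rangle = n$ directly and notes that (B) forces $\bH = u\,x$, whence $u\equiv\tfrac12$; both are standard and equivalent. The one place your write-up is genuinely vague is (C)$\Rightarrow$(A): from $\langle A(u,v),\,\bH-\tfrac12 x\rangle=0$ the trace gives only $\langle \bH,\,\bH-\tfrac12 x\rangle=0$, and ``exploiting umbilicity'' must still be unpacked to mean $\langle x,\,\bH-\tfrac12 x\rangle=0$ (trace your own identity $\langle x, A\rangle=-\langle\cdot,\cdot\rangle$ to get $\langle x,\bH\rangle=n$); only then can you conclude, since $\bH-\tfrac12 x\in{\rm span}\{\bH,x\}$ is orthogonal to both generators and hence to itself. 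The paper does this last step more transparently without passing through the bilinear identity: trace (C) to get $|\bH|^2=\tfrac{n}{2}$, compute $\langle \bH,x\rangle=n$, and simply expand $|\bH-\tfrac12 x|^2=\tfrac{n}{2}+\tfrac{n}{2}-n=0$.
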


\begin{proof}
The   equivalence of (A) and (B) is well known.  (A) implies (B) since the $F$ functional is equivalent to area for spherical submanifolds.
Fix a point $p$ in $\Gamma$ and let $e_i$ be an orthonormal frame for $\Gamma$ with $\nabla_{e_i}^T e_j = 0$ at $p$.  Since the $e_i$'s are tangent also to the sphere, we have $\langle e_i , x \rangle = 0$.
Differentiating this gives
\begin{align}	\label{e:nHx}
	- n = \langle \nabla_{e_i} e_i , x \rangle  = -\langle \bH , x \rangle \, .
\end{align}
 In   (B),  $\bH = u \, x$ for a function $u$ on $\Sigma$.  By \eqr{e:nHx}, $u \equiv \frac{1}{2}$, giving (A).  Furthermore, (A)  and 
Proposition \ref{p:shrink} imply (C).
Finally, we will show that (C) implies (A) and (B).  Taking the trace of (C) gives that $|\bH|^2 = \frac{n}{2}$.  Since $|x|^2 \equiv 2\,n$ on $\Gamma$,  $x$ is normal to $\Gamma$ so
\begin{align}
	\langle \bH , x \rangle = - \langle \nabla_{e_i} e_i , x \rangle = \langle e_i , \nabla_{e_i} x \rangle = n \, .
\end{align}
It follows that
\begin{align}
	\left| \bH - \frac{x}{2} \right|^2 = |\bH|^2 + \frac{|x|^2}{4} - \langle \bH , x \rangle = \frac{n}{2} + \frac{2n}{4} - n = 0 \, .
\end{align}
We conclude that $\bH = \frac{x}{2}$, giving (A) and (B) and, thus, completing the proof.
\end{proof}

 \section{Proof of Theorem \ref{t:entropyextra2}}

We say that $u \in L^2$ is an eigenfunction of $\cL$ with eigenvalue $\mu$ if $\cL\,u+\mu\,u=0$.   Let $\mu_0 = 0 < \mu_1 \leq \mu_2 \leq \dots$ be the eigenvalues of $\cL$ on $\Sigma$ and $u_0  , u_1, \dots$ the corresponding $L^2$-orthonormal eigenfunctions (note that $u_0$ is constant); see \cite{CM5} for details.

\begin{Pro}	\label{p:extradims}
If $\Sigma^n \subset \RR^N$ is contained in a proper linear subspace $ \cV \subset \RR^N$ and is $F$-stable, then  $\mu_1 \geq \frac{1}{2}$.
\end{Pro}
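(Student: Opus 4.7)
The plan is to test $F$-stability against a normal section of the form $\eta = u_1 \, V$, where $u_1$ is the first nonconstant eigenfunction of $\cL$ (with eigenvalue $\mu_1$) and $V \in \RR^N$ is a unit vector in $\cV^\perp$ (which exists since $\cV$ is proper). Two geometric features of the hypothesis make this choice convenient: every tangent vector of $\Sigma \subset \cV$ is perpendicular to $V$, so $V$ is itself normal to $\Sigma$ and $V^\perp = V$ pointwise; in particular $|V^\perp|^2 \equiv 1$ on $\Sigma$, so the weight in Lemma \ref{l:phiVvar} becomes trivial.

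Before invoking stability, I need to verify that $\eta$ is $L^2$-orthogonal to the two families of unstable directions. For dilations the relevant section is $\bH$, and I first claim $\bH \in \cV$ pointwise. Indeed, since $\cV$ is linear, each position vector $x(p)$ lies in $\cV$; since $T_p\Sigma \subset \cV$, the projection $x^\perp = x - x^T$ also lies in $\cV$, so $\bH = \tfrac{1}{2} x^\perp \in \cV$. In particular $\langle \bH, V \rangle \equiv 0$, which gives $\int \langle \eta, \bH \rangle \, \e^{-f} = 0$. For translations, given any $W \in \RR^N$, the constancy of $V$ yields
\begin{align*}
\int \langle \eta, W^\perp \rangle \, \e^{-f} = \langle V, W \rangle \int u_1 \, \e^{-f} = 0,
\end{align*}
where the last equality follows because $u_1$ is $L^2$-orthogonal to the constant eigenfunction $u_0$.

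With these orthogonalities in place, $F$-stability gives $\delta^2(\eta) \geq 0$. Applying Lemma \ref{l:phiVvar} with $\phi = u_1$, using $|V^\perp|^2 \equiv 1$, and combining with the identity $\int |\nabla u_1|^2 \, \e^{-f} = \mu_1 \int u_1^2 \, \e^{-f}$ obtained by integrating $\cL u_1 = -\mu_1 u_1$ against $u_1$, I get
\begin{align*}
0 \leq \delta^2(\eta) = (4\pi)^{-n/2} \Bigl(\mu_1 - \tfrac{1}{2}\Bigr) \int u_1^2 \, \e^{-f}.
\end{align*}
Since $u_1 \not\equiv 0$, the integral is strictly positive, and thus $\mu_1 \geq \tfrac{1}{2}$.

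The argument contains no genuinely hard step; it is bookkeeping around Lemma \ref{l:phiVvar}. The only point requiring care is the orthogonality check, which hinges on two small but essential facts: that $\bH$ stays inside $\cV$ (which makes $\eta$ orthogonal to dilations) and that $u_1$ integrates to zero against $\e^{-f}$ (which makes $\eta$ orthogonal to translations). Both would fail had I chosen $V$ with a component along $\cV$, which is exactly why the properness of $\cV$ is used.
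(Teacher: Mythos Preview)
Your proof is correct and follows essentially the same approach as the paper: test stability with $u_1 E$ for a unit vector $E \in \cV^\perp$, check orthogonality to $\bH$ and to translations, and conclude via the second-variation formula. The only cosmetic differences are that the paper computes $L(\phi E) = (\tfrac{1}{2}-\mu)\phi E$ directly rather than quoting Lemma~\ref{l:phiVvar}, and handles the translation orthogonality by splitting into $W \in \cV$ versus $W = E$, whereas you treat all $W$ at once via $\langle V, W^\perp\rangle = \langle V, W\rangle$.
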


\begin{proof}
Let  $\phi$ be an eigenfunction with $\cL \, \phi = - \mu \, \phi$ and $\mu > 0$.   Let $E \in  \cV^{\perp} \subset \RR^N$ be a unit vector.   
Observe that
\begin{align}	\label{e:Lofit}
	L \, (\phi \, E) =\left[  \left(\cL + \frac{1}{2}\right)\, \phi \right] \, E =   \left(  \frac{1}{2} - \mu \right)\, \phi  \, E \, .
\end{align}

We will show that $\phi \, E$ is an allowable variation, i.e., is orthogonal to $\bH$ and all translations.
Since $\Sigma \subset  \cV$, we have at each point that $\bH$ is parallel to $ \cV$ and, thus, that $\langle \bH , E \rangle = 0$ point-wise.  Let $V$ be any vector parallel to $ \cV$ and note that $  V^T$ must also be parallel to $ \cV$.
Thus, 
  $\langle E , V^{\perp} \rangle =0$ point-wise.  Finally, the last translation vector field is $E$ itself and $\int \langle \phi \, E , E \rangle \, \e^{-f} = \int \phi \, \e^{-f} = 0$.
Since $\phi \, E$ is allowable, $F$-stability and \eqr{e:Lofit} give 
\begin{align}
	0 \geq \int \langle \phi \, E , L \, (\phi \, E) \rangle = \int  \left(  \frac{1}{2} - \mu \right) \, \phi^2 \, .
\end{align}
\end{proof}

When  $\Sigma$ has   $F$-index $I > 0$, then a   variation of Proposition \ref{p:extradims} gives   $\mu_{I+1} \geq \frac{1}{2}$.

\vskip1mm
Next, we adapt a result of Korevaar, \cite{K} (see \cite{GNY}; cf.  \cite{He}, \cite{YY}) to this setting:

\begin{Pro}	\label{p:kor}
There is a universal constant $C$ such that if $\Sigma^2\subset \RR^N$ is   closed, then  
\begin{align}
\mu_k (\Sigma)\,\lambda (\Sigma)\leq   C\,(1+\gamma)\,k\, .
\end{align}
\end{Pro}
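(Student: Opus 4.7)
The plan is to adapt the Grigor'yan--Netrusov--Yau version of Korevaar's eigenvalue bound to the weighted setting. Regard $\Sigma$ as a metric measure space with the restricted extrinsic distance from $\RR^N$ and the Gaussian measure $d\mu = (4\pi)^{-n/2}\,\e^{-f}\,dA_\Sigma$, so that $\mu(\Sigma) = F(\Sigma) \leq \lambda(\Sigma)$. Using the Rayleigh characterization
\begin{align}
\mu_k (\Sigma) \;=\; \min_{\dim V = k+1} \; \max_{0 \neq u \in V} \; \frac{\int |\nabla u|^2\,d\mu}{\int u^2\,d\mu}\, ,
\end{align}
it suffices to produce $k+1$ Lipschitz functions with pairwise disjoint supports, each having Rayleigh quotient at most $C\,(1+\gamma)\,k/\lambda(\Sigma)$. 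Taking extrinsic rather than intrinsic distance is natural because the weight $\e^{-f}$ is a function of extrinsic position, and because the topology of $\Sigma \cap B_r(x)$ is what one can control by genus.

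First I would run the GNY mass-decomposition in $\RR^N$ for the finite Borel measure $\mu$ (pushed forward from $\Sigma$). A Calder\'on--Zygmund-style stopping-time argument on dyadic cubes, adapted to doubling in the ambient space, produces a family of $N_0 = C\,(1+\gamma)\,k$ pairwise disjoint extrinsic balls $B_{r_i}(x_i)$ with $\mu\bigl(B_{r_i}(x_i)\cap \Sigma\bigr) \geq c\,\mu(\Sigma)/N_0$, together with concentric doubles $B_{2r_i}(x_i)$ whose ambient multiplicity is universally bounded. Because $\mu$ is a positive Borel measure on $\RR^N$, the weighted nature of the problem affects only the bookkeeping for $\mu$ and not the combinatorics of the decomposition.

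Second I would pass from ambient multiplicity to intrinsic disjointness using the genus. For a closed surface $\Sigma^2$ of genus $\gamma$ in $\RR^N$, Korevaar's topological input (see Hassannezhad and GNY) says that any family of extrinsic balls whose doubles have bounded ambient multiplicity, when intersected with $\Sigma$, has intrinsic multiplicity $\leq C\,(1+\gamma)$: connected components of $\Sigma \cap B_{2r_i}$ contribute to a bounded graph whose Euler characteristic is dominated by $\chi(\Sigma)$. Discarding balls greedily leaves at least $N_0/(C\,(1+\gamma)) \geq k+1$ with pairwise disjoint $B_{2r_i}(x_i)\cap \Sigma$. For each survivor, take $\phi_i = \max\bigl(0, 1 - r_i^{-1}(|x - x_i| - r_i)\bigr)$ restricted to $\Sigma$, so $|\nabla \phi_i|\leq 1/r_i$ and
\begin{align}
\frac{\int |\nabla \phi_i|^2\,d\mu}{\int \phi_i^2\,d\mu} \;\leq\; \frac{\mu\bigl(B_{2r_i}(x_i)\cap \Sigma\bigr)}{r_i^2\,\mu\bigl(B_{r_i}(x_i)\cap \Sigma\bigr)}\, .
\end{align}
The GNY decomposition is arranged precisely so that the right-hand side is $\leq C\,N_0 / \lambda(\Sigma) = C\,(1+\gamma)\,k/\lambda(\Sigma)$, which yields the claim.

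The main obstacle will be the genus step: verifying that Korevaar's surface-specific topological multiplicity bound goes through for $\Sigma^2$ immersed in arbitrary codimension, using extrinsic rather than intrinsic balls. The Gaussian weight itself is harmless since it is a smooth positive function of extrinsic position and thus does not disturb either the Calder\'on--Zygmund packing or the topological multiplicity count; the real work is checking that the ambient-to-intrinsic multiplicity conversion survives immersion and higher codimension, for which one uses only that $\Sigma \cap B_{2r_i}$ is an immersed surface whose total Euler characteristic is controlled by $\chi(\Sigma) = 2 - 2\gamma$.
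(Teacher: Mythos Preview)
Your outline has a genuine gap at the energy (capacity) step, and it misses the short conformal reduction the paper actually uses. The paper sets $g_1 = \e^{-f} g$; in dimension two the Dirichlet integral is conformally invariant, so $\int |\nabla_g u|^2\, dA_g = \int |\nabla_{g_1} u|^2\, dA_{g_1}$, while $\int u^2\, \e^{-f}\, dA_g = \int u^2\, dA_{g_1}$. Combined with the pointwise bound $\e^{-f} \leq 1$ on the numerator this yields $\mu_k(\cL) \leq \mu_k(\Delta_{g_1})$ for every $k$, and Korevaar's intrinsic theorem applied to $(\Sigma, g_1)$ then gives $\mu_k(\Delta_{g_1})\,\Area_{g_1}(\Sigma) \leq C(1+\gamma)k$. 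Since $\Area_{g_1}(\Sigma) = 4\pi\,\lambda(\Sigma)$ under the standing shrinker hypothesis, the proposition follows in two lines.

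The step that fails in your outline is the assertion that ``the GNY decomposition is arranged precisely so that'' $\mu(B_{2r_i})\big/\bigl(r_i^{2}\,\mu(B_{r_i})\bigr) \leq C N_0/\lambda(\Sigma)$. The mass decomposition gives $\mu(B_{r_i}) \geq c\,\mu(\Sigma)/N_0$ and, at best, a doubling bound $\mu(B_{2r_i}) \leq C\,\mu(B_{r_i})$; plugging these in reduces your Rayleigh bound to $C/r_i^{2}$, and turning that into $C N_0/\lambda(\Sigma)$ would require $r_i^{2} \geq c\,\mu(\Sigma)/N_0$, equivalently a local density bound $\mu(\Sigma \cap B_r) \leq C r^{2}$. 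No such bound is available a priori --- it is essentially a local entropy estimate, which is what one is trying to prove. In Korevaar's argument the corresponding step is the capacity bound for an annulus, and that comes from the \emph{conformal structure}: one uniformizes and uses that annuli in the model have universally bounded capacity, with the Dirichlet energy conformally invariant. Your extrinsic cutoffs on ambient balls do not see the conformal structure; their Dirichlet energy is controlled only by the weighted area of $\Sigma \cap B_{2r_i}$, which can be arbitrarily large relative to $r_i^{2}$ for a folded immersed surface. So the obstacle is not the genus step you flag but the energy estimate. (A secondary issue: the Euler-characteristic count you invoke does not bound the number of components of $\Sigma \cap B_{2r_i}$, since disk-type components contribute nothing to $\chi$ and there can be arbitrarily many; Korevaar's topological input works because his annuli are intrinsic.)
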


\begin{proof}
Let $g$ be the metric on $\Sigma$ and define the conformal metric $g_1 = \e^{-f} \, g$. Let $dv_g$ and $dv_{g_1}$ be the corresponding area elements.
Note that $\lambda (\Sigma) = \frac{1}{4\pi} \, \Area_{g_1} (\Sigma)$.
Since $\e^{-f} \leq 1$, we have for any function $u$ that
\begin{align}
	\int |\nabla_g u|^2 \, \e^{-f} \, dv_g &\leq \int |\nabla_g u|^2 \, dv_g = \int |\nabla_{g_1} u|^2\, dv_{g_1}  \, , \\
	\int u^2 \, \e^{-f} \, dv_g & = \int u^2 \, dv_{g_1}  \, .
\end{align}
 Thus, for each $k$, it follows that
$
 	\mu_k = \mu_k (\cL) \leq \mu_k (\Delta_{g_1}) $.   Finally, \cite{K} gives that 
 \begin{align}
 	\mu_k (\Delta_{g_1}) \leq \frac{C (1+\gamma)\, k}{  \Area_{g_1} (\Sigma)} = \frac{ C (1+\gamma)\, k}{ 4\pi \, \lambda (\Sigma)} \, .
 \end{align}
\end{proof}

\begin{proof}[Proof of Theorem \ref{t:entropyextra2}]
Corollary $0.9$ in \cite{CM5} gives $C_1$ so that if $N \geq C_1 \, \lambda_{\Sigma}$, then there is a proper linear subspace $\cV \subset \RR^N$ so that $\Sigma \subset \cV$.  
Combining Propositions \ref{p:extradims} and \ref{p:kor} gives 
\begin{align}
	\frac{1}{2} \leq \mu_1 (\cL) \leq \frac{  C \, (1+\gamma)}{\lambda (\Sigma)}
	 \, .
\end{align}
The second claim follows from the first and  corollary $0.9$ in \cite{CM5}.      
\end{proof}

When $\Sigma$ is diffeomorphic to a sphere, we can argue as above and use \cite{He} to obtain \eqr{e:becomes}.

\begin{Con}
Theorem  \ref{t:entropyextra2} holds for complete $n$-dimensional $\lambda$-stable shrinkers.  
\end{Con}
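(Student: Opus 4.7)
The plan is to adapt the three-step architecture of the proof of Theorem~\ref{t:entropyextra2} to the complete $n$-dimensional $\lambda$-stable setting. The ingredients that must be generalized are (a) the codimension reduction from Corollary 0.9 of \cite{CM5}, (b) the eigenvalue lower bound of Proposition~\ref{p:extradims} under stability, and (c) the eigenvalue upper bound of Proposition~\ref{p:kor} coming from \cite{K}.

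Step (a) would proceed as follows. The translation eigenspace $\{V^{\perp} : V \in \RR^N\}$ sits inside the $\tfrac{1}{2}$-eigenspace of $L$ by \eqref{e:1point5}. If one can bound $\dim \Ker (L - \tfrac{1}{2})$ linearly in $\lambda(\Sigma)$ in the complete $n$-dimensional setting, then the span of the $V^{\perp}$ is low-dimensional and $\Sigma$ must be contained in a proper subspace $\cV$ once $N \geq C\,\lambda(\Sigma)$. This should follow from heat-trace/weighted eigenvalue counting for the drift operator, combined with the polynomial volume growth $\Vol(B_r \cap \Sigma) \leq C\,\lambda(\Sigma)\, r^n$ for finite-entropy shrinkers and standard $L^2$-cutoff arguments to pass from the closed case treated in \cite{CM5} to completeness.

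Step (b) should go through with little modification. Given a Gaussian $L^2$-eigenfunction $\phi$ of $\cL$ with eigenvalue $\mu > 0$ and a unit $E \in \cV^{\perp}$, the normal field $\phi\, E$ is everywhere orthogonal to $\bH$ (which lies in $\cV$) and to all translations $V^{\perp}$ with $V$ parallel to $\cV$, and orthogonality to $E$-translations is the integral condition on $\phi$. The second variation computation \eqref{e:Lofit} then applies verbatim. To pass from $F$-stability to $\lambda$-stability one invokes the correspondence of Section~7 of \cite{CM3}; in the complete case one would realize $\phi\,E$ as an $L^2$-limit of compactly supported variations, with tails controlled by the finite-entropy assumption, and $\lambda$-stability would then force $\mu \geq \tfrac{1}{2}$ exactly as in Proposition~\ref{p:extradims}.

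Step (c) is where I expect the real difficulty. Korevaar's bound exploits the conformal invariance of the Dirichlet energy in dimension two and the Riemann--Roch/uniformization control of conformal structures by genus, neither of which is available for $n \geq 3$, so topology must enter through a different route. I would try to establish an upper bound of the form $\mu_1(\cL) \leq C(\mathrm{top}(\Sigma))\,\lambda(\Sigma)^{-2/n}$, perhaps through a Li--Yau/Hassannezhad style covering by conformally embedded balls in $\RR^N$ with multiplicity controlled by $\lambda(\Sigma)$ and by a suitable higher-dimensional substitute for genus (for instance, total Betti numbers). Combined with the lower bound $\mu_1 \geq \tfrac{1}{2}$ from step (b), this yields the entropy bound, after which the codimension bound follows from step (a). The main obstacle is identifying the correct higher-dimensional topological invariant and establishing a conformal upper bound with scaling in $\lambda$ sharp enough to close the argument; even the case of topological spheres in $n \geq 3$ appears to require genuinely new tools beyond the $2$-dimensional Korevaar machinery.
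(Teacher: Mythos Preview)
The statement you are addressing is a \emph{Conjecture} in the paper, not a theorem; the paper offers no proof and presents it as an open problem immediately after the proof of Theorem~\ref{t:entropyextra2}. So there is no ``paper's own proof'' to compare against, and your submission should be read as a research plan rather than a proof attempt.

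As a plan, your outline correctly isolates the three ingredients, and your steps (a) and (b) are plausible extensions: the codimension reduction of \cite{CM5} and the eigenvalue lower bound of Proposition~\ref{p:extradims} do not use dimension two in any essential way, and passing from closed to complete with finite entropy is a matter of cutoff arguments that have been carried out elsewhere in this literature. But your step (c) is not a proof step; it is an acknowledgment of the genuine obstruction. Korevaar's bound, as you note, rests on conformal invariance of the Dirichlet energy and on uniformization, both of which are purely two-dimensional. No inequality of the form $\mu_1(\cL)\,\lambda(\Sigma)^{2/n} \leq C(\text{topology})$ is known for $n\geq 3$, and Hassannezhad/Li--Yau type arguments give bounds in terms of conformal volume or isoperimetric data rather than a topological invariant alone. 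Even the meaning of the conjecture is underdetermined in this direction: the paper does not specify what should replace the genus $\gamma$ for $n$-dimensional shrinkers, so identifying the right invariant is part of the problem, not merely a technical step.

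In short: your proposal is an honest and well-organized sketch of why the conjecture is natural and where it is hard, but it is not a proof, and the paper does not claim one.
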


\subsection{Spectrum of $\cL$ and $L$ for curves}

In \cite{AL}, Abresch-Langer classified  shrinking curves.  The   embedded ones are the circle and lines.  By lemma $6.16$ in \cite{CM5},
 every other   shrinking curve with $\lambda < \infty$  is closed, planar,  and strictly convex with Gauss map of degree at least two.

 \begin{Lem}	\label{l:shricur}
 If $\gamma \subset \RR^2$ is a closed shrinker and $\gamma \ne \SS^1_{\sqrt{2}}$, then:
 \begin{enumerate}
 \item The lowest eigenvalue of  $L$ is $-1$ and the next   is   less than $- \frac{1}{2}$.
 \item The lowest eigenvalue of $\cL$ is $0$ and the next is less than $\frac{1}{2}$.
 \end{enumerate}
 \end{Lem}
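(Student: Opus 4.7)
My plan is to handle the four pieces of the lemma in sequence, relying on Krein--Rutman for the two "lowest eigenvalue" claims and on the rotation Jacobi field together with the Gauss-degree-$d\geq 2$ structure for the two strict inequalities.

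For the lowest eigenvalues: since $\gamma\neq\SS^1_{\sqrt{2}}$ is a closed planar shrinker, the Abresch--Langer classification (lemma $6.16$ in \cite{CM5}) gives that $\gamma$ is strictly convex with Gauss map degree $d\geq 2$; in particular the scalar mean curvature $H=\tfrac{1}{2}\langle x,N\rangle$ is strictly positive on $\gamma$. The identity $L\bH=\bH$ reduces in codimension one to $LH=H$, so $H$ is a positive eigenfunction of $L$. Krein--Rutman applied to a large constant shift of $-L$ then yields that $H$ realizes the top eigenvalue, which in the paper's $Lu+\mu u=0$ convention is $\mu_0(L)=-1$. For $\cL$, the constants are eigenfunctions with eigenvalue $0$, and integration by parts gives $-\int u\,\cL u\,e^{-f}ds=\int|\nabla u|^2 e^{-f}ds\geq 0$, so $\mu_0(\cL)=0$.

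For the strict inequalities, the key new ingredient beyond the circle case is the nontrivial rotation Jacobi field: ambient $SO(2)$-rotations preserve the shrinker equation but do not fix $\gamma$, so the normal variation $\phi:=\langle x,T\rangle$ is nonzero and satisfies $L\phi=0$. Using $(e^{-f})'=-\tfrac{1}{2}\phi\,e^{-f}$ along $\gamma$ and closedness, $\int \phi\,e^{-f}ds=0$, so $\phi$ is orthogonal to constants. To push the Rayleigh quotient strictly below the critical value, I use the Gauss-degree-$d$ structure. The tangent angle $\alpha$ along $\gamma$ has total increment $2\pi d$, and $\gamma$ carries a $d$-fold rotational symmetry. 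The function $u:=\cos(\alpha/d)$ is then well-defined on $\gamma$; moreover $d$-fold invariance of $e^{-f}ds$ combined with $\sum_{k=0}^{d-1}e^{2\pi ik/d}=0$ forces $\int \cos(\alpha/d)\,e^{-f}ds=0$, so $u$ is a valid test function for $\mu_1(\cL)$. Its derivative $u'=-\sin(\alpha/d)\,H/d$ gains the factor $1/d^2\leq 1/4$ in the Dirichlet energy, and using the Abresch--Langer first integral $H_\alpha^2+H^2-\log H=E$ to control $H$ uniformly yields $\int|\nabla u|^2 e^{-f}ds<\tfrac{1}{2}\int u^2 e^{-f}ds$, hence $\mu_1(\cL)<\tfrac{1}{2}$. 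For the $L$-claim I would apply the same test function through the conjugation $\tilde L:=H^{-1}LH$, which is self-adjoint on $L^2(H^2 e^{-f}ds)$ with the same spectrum as $L$ and has Dirichlet form $\int g\tilde Lg\, H^2 e^{-f}=\int g^2H^2 e^{-f}-\int(g')^2H^2 e^{-f}$; the same oscillatory test function yields a second eigenvalue strictly above $\tfrac{1}{2}$, i.e., $\mu_1(L)<-\tfrac{1}{2}$ in the paper's convention.

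The main obstacle is the quantitative verification of the strict inequalities: the factor $1/d^2$ provides room, but one must bound the weighted averages of $H^2$ against $\sin^2(\alpha/d)$ uniformly on $\gamma$, and this estimate is what forces use of the explicit Abresch--Langer ODE for $H(\alpha)$. A secondary subtlety is ensuring the orthogonality correction (if any is needed after the symmetry argument) does not spoil the Dirichlet-energy gain.
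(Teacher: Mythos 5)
Your handling of the two ``lowest eigenvalue'' statements is fine and coincides with the paper's: $H>0$ together with $L\,H=H$ identifies $-1$ as the bottom of the spectrum of $L$, and constants give $\mu_0(\cL)=0$. The gap is in the two strict inequalities, which are the real content of the lemma (they are exactly what eliminates the non-circular Abresch--Langer curves in Corollary \ref{c:shricur}). First, the orthogonality of your test function $u=\cos(\alpha/d)$ to constants rests on a claimed $d$-fold rotational symmetry of $\gamma$ with $d$ the Gauss map degree. But the Abresch--Langer curve $\Gamma_{p,q}$ has rotational symmetry of order $q$ (the number of curvature periods), while the Gauss degree is $p$, and $\sqrt{2}\,p<q<2p$, so $q\neq d$; under the order-$q$ symmetry the phases $2\pi j/(pq)$, $j=0,\dots,q-1$, do not sum to zero, so $\int\cos(\alpha/d)\,\e^{-f}\,ds$ need not vanish. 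One can repair this by passing to a combination $a\cos(\alpha/d)+b\sin(\alpha/d)$, but that only aggravates the second, more serious issue: the decisive quantitative inequality $\frac{1}{d^{2}}\int\sin^{2}(\alpha/d)\,H^{2}\,\e^{-f}\,ds<\frac12\int\cos^{2}(\alpha/d)\,\e^{-f}\,ds$ (and its $H^{2}\e^{-f}$-weighted analogue for the conjugated operator $H^{-1}LH$) is never verified --- you flag it yourself as the main obstacle --- and the curvature of $\Gamma_{p,q}$ is not controlled by $d$ alone, so the factor $1/d^{2}$ does not by itself close the argument; the first integral you quote for the Abresch--Langer ODE is also not the correct one, so the proposed route to that estimate is not in place. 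The aside about the rotation field $\phi=\langle x,T\rangle$ does not help: $L\phi=0$ only produces the eigenvalue $0$ of $L$, which lies above $-\frac12$, and as a test function for $\cL$ one has $\cL\phi=-(\frac12+k^{2})\phi$, so its Rayleigh quotient exceeds $\frac12$, the wrong direction.

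The paper gets the strict inequalities with no quantitative work at all: for (1) it uses the translation eigenfunctions $u_i=\langle \nn, E_i\rangle$, which satisfy $L\,u_i=\frac12\,u_i$ and, since the Gauss map is monotone of degree at least two, have at least four nodal domains, so the Courant nodal domain theorem forces an eigenvalue of $L$ strictly below $-\frac12$; for (2) the same argument is applied to the coordinate functions $x_i$, which satisfy $\cL\,x_i=-\frac12\,x_i$ and again have at least four nodal domains by strict convexity and the degree condition. If you wish to keep a test-function approach you must actually carry out the weighted estimates from the Abresch--Langer ODE (with the correct symmetry order $q$); as written, the strict inequalities are not proved.
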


 \begin{proof}
 Let $\nn$ be the outward pointing unit normal.    Since $\gamma$ is strictly convex, $H = \langle \bH , \nn \rangle$ is positive.  By \cite{CM3}, $L \, H = H$ and, thus, $H$ is the lowest eigenfunction for $L$.
 
 Let $E_1 , E_2$ be the standard basis for $\RR^2$.
 By \cite{CM3}, the translation
 $u_i = \langle \nn , E_i \rangle$ is a $-\frac{1}{2}$-eigenfunction $L \, u_i = \frac{1}{2} \, u_i$.  
 Since $\nn$ is monotone as a map from $\SS^1$ to $\SS^1$ with degree at least two, $u_i$ has at least four  nodal domains.
 The Courant nodal domain theorem then gives that there must be another eigenvalue below $- \frac{1}{2}$.  This gives (1).
 
  For part (2), observe that $x_i = \langle E_i , x \rangle$ is a  $\frac{1}{2}$-eigenfunction $\cL \, x_i = -\frac{1}{2} \, x_i$. Since $\gamma$ is strictly convex and  $\nn$ has degree at least two, 
  $x_i$ has at least two positive local maxima on $\gamma$ and a negative local minimum between each maxima.  From this, we see that $x_i$ has at least four nodal domains and (2) now follows 
  from the   Courant nodal domain theorem.
 \end{proof}

 \begin{Cor}	\label{c:shricur}
 If $\gamma \subset \RR^2$ is a $F$-stable shrinker with $\lambda (\gamma) < \infty$, then $\gamma = \RR$ or $\gamma = \SS^1_{\sqrt{2}}$.
 \end{Cor}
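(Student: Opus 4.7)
The plan is to combine the Abresch--Langer classification of shrinking curves with Lemma \ref{l:shricur}(1): whenever $\gamma\subset\RR^2$ is an $F$-stable shrinker which is neither a line nor $\SS^1_{\sqrt{2}}$, I will produce a destabilizing normal variation built from a low eigenfunction of $L$.

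\emph{Step 1 (Classification).} By \cite{AL} together with lemma $6.16$ in \cite{CM5}, every immersed shrinking curve in $\RR^2$ with $\lambda<\infty$ is one of: (i) a line through the origin, (ii) the circle $\SS^1_{\sqrt{2}}$, or (iii) a closed, strictly convex planar curve whose Gauss map $\nn:\gamma\to\SS^1$ has degree $d\geq 2$. Cases (i) and (ii) are $F$-stable and give the conclusion, so it suffices to rule out (iii).

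\emph{Step 2 (Low eigenfunction and orthogonalities).} Assume $\gamma$ is of type (iii). Lemma \ref{l:shricur}(1) yields an eigenfunction $\phi$ of $L$ with $L\phi=c\,\phi$ for some $c>\tfrac{1}{2}$, and with $c\neq 1$: strict convexity gives $H>0$, so the top eigenspace of $L$ is simple (Perron--Frobenius) and is spanned by $H$. In particular, the eigenvalue $c$ of $\phi$ differs from both the dilation eigenvalue $1$ (satisfied by $\bH$) and the translation eigenvalue $\tfrac{1}{2}$ (satisfied by $V^{\perp}$) from \eqr{e:1point5}. Since $L$ is self-adjoint with respect to the Gaussian inner product, eigenfunctions for distinct eigenvalues are $L^2(e^{-f})$-orthogonal, so
\[
\int \phi\, H\, e^{-f} \;=\; 0 \qquad \text{and} \qquad \int \phi\, \langle \nn, V\rangle\, e^{-f} \;=\; 0 \quad \text{for every } V\in\RR^2.
\]

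\emph{Step 3 (Destabilization).} Set $u=\phi\,\nn$, a normal variation of $\gamma$. The two orthogonality relations above say exactly that $u$ is $L^2(e^{-f})$-orthogonal to $\bH$ and to every translation $V^{\perp}$, so $u$ lies in the admissible cone for the $F$-stability of $\gamma$. In codimension one $L(\phi\,\nn)=(L\phi)\,\nn$, so the second variation formula \eqr{e:2varr} (with $n=1$) gives
\[
\delta^2(u) \;=\; -(4\pi)^{-1/2}\int \langle \phi\,\nn,\, L(\phi\,\nn)\rangle\, e^{-f} \;=\; -(4\pi)^{-1/2}\, c \int \phi^2\, e^{-f} \;<\; 0,
\]
contradicting $F$-stability. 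Hence no $\gamma$ of type (iii) is $F$-stable, proving the corollary.

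All the geometric content sits inside Lemma \ref{l:shricur}(1) (the existence of a second eigenvalue strictly below $-\tfrac{1}{2}$), which is already established via Courant's nodal domain theorem applied to the translation eigenfunctions $\langle \nn, E_i\rangle$ for a Gauss map of degree $\geq 2$. The remaining work---checking orthogonality of $\phi\,\nn$ to $\bH$ and to $V^{\perp}$ and reading off the sign of $\delta^2(u)$---is formal self-adjointness bookkeeping, so the only subtle point is the simplicity of the top eigenspace used to guarantee $c<1$, which follows from the positivity of $H$ on a strictly convex curve.
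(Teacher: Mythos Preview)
Your proof is correct and follows essentially the same route as the paper's: both reduce to the closed non-circular case, invoke Lemma~\ref{l:shricur}(1) to obtain an $L$-eigenfunction with eigenvalue strictly between $-1$ and $-\tfrac12$, and use self-adjointness of $L$ to conclude that this eigenfunction is Gaussian-$L^2$ orthogonal to $\bH$ and to all translations $V^\perp$, giving an admissible variation with $\delta^2<0$. Your explicit justification that $c<1$ via simplicity of the ground state (from $H>0$) is exactly what underlies the paper's reading of Lemma~\ref{l:shricur}(1) as producing an eigenvalue \emph{strictly} between $-1$ and $-\tfrac12$; the paper leaves this implicit in the proof of that lemma.
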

 
 \begin{proof}
 We can assume  $\gamma$ is closed since otherwise $\lambda (\gamma) < \infty$ implies that $\gamma = \RR$.  
 If $\gamma \ne \SS^1_{\sqrt{2}}$, then Lemma \ref{l:shricur} gives an
  eigenvalue for $L$ strictly between $-1$ and $- \frac{1}{2}$.  The corresponding eigenfunction gives a negative variation that is orthogonal to $\bH$ and to translations.
 \end{proof}

 \section{Sharp bounds for the Gaussian Willmore functional}
 
 In general, $W$ is always bounded  by entropy (cf. corollary $3.34$ in \cite{CM3}):

\begin{Lem}	\label{l:UB}
If $\Sigma^n \subset \RR^N$, then	$2\, W(\Sigma)  \leq n \, \lambda (\Sigma)$.    Equality holds if and only if $\Sigma\subset \partial B_{\sqrt{2\,n}}$.   
\end{Lem}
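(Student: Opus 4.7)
The plan is to combine the shrinker equation, which forces $|\bH|^2 = \tfrac14 |x^\perp|^2$, with a Gaussian second-moment identity. Each ingredient is elementary on its own, so the argument is essentially a clean three-line computation once the identity is in hand.

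First I would record the pointwise bound: since $\Sigma$ is a shrinker (the standing assumption), $\bH = \tfrac12 x^\perp$, so
\begin{align*}
|\bH|^2 = \tfrac14 |x^\perp|^2 \le \tfrac14 |x|^2,
\end{align*}
with pointwise equality precisely when $x^T = 0$ at the given point, i.e.\ $x$ is normal to $\Sigma$ there. This uses only $|x|^2 = |x^T|^2 + |x^\perp|^2$.

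Second, I would derive the second-moment identity $\int_\Sigma |x|^2\,\e^{-f} = 2n\int_\Sigma \e^{-f}$ by applying the Gaussian integration-by-parts formula $\int_\Sigma \cL u \, \e^{-f} = 0$ to $u = |x|^2/2$. Since $\nabla u = x^T$ and $\Delta_\Sigma(|x|^2/2) = n - \langle \bH, x\rangle$, one has $\cL u = n - \langle \bH, x\rangle - \tfrac12 |x^T|^2$. Inserting the shrinker equation in the form $\langle \bH, x\rangle = \tfrac12 |x^\perp|^2$ collapses this to $\cL u = n - \tfrac12 |x|^2$, and the identity follows.

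Third, I would integrate the pointwise bound against $(4\pi)^{-n/2}\,\e^{-f}$ and plug in the second-moment identity:
\begin{align*}
W(\Sigma) \le \tfrac14 (4\pi)^{-n/2} \int_\Sigma |x|^2 \, \e^{-f} = \tfrac{n}{2} (4\pi)^{-n/2} \int_\Sigma \e^{-f} = \tfrac{n}{2}\, F(\Sigma) \le \tfrac{n}{2}\, \lambda(\Sigma),
\end{align*}
using only $F(\Sigma) \le \lambda(\Sigma)$ from the definition of entropy at the final step. For the equality case, equality throughout forces $|x^\perp| = |x|$ pointwise, hence $x^T \equiv 0$, which makes $|x|^2$ locally constant; so $\Sigma \subset \partial B_R$ for some $R>0$. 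Lemma \ref{p:basicH} then pins down $R = \sqrt{2n}$. Conversely, if $\Sigma \subset \partial B_{\sqrt{2n}}$, Lemma \ref{p:basicH} supplies the shrinker equation and all the bounds hold with equality (with $F = \lambda$ for shrinkers automatic from Huisken's monotonicity). The only real obstacle is bookkeeping sign conventions in the formula $\Delta_\Sigma(|x|^2/2) = n - \langle \bH, x\rangle$ so that it is consistent with the form $\bH = \tfrac12 x^\perp$ used in the paper; once this is sorted out, no further difficulty arises.
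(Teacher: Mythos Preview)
Your proof is correct and follows essentially the same route as the paper: both use $4|\bH|^2 = |x|^2 - |x^T|^2$ together with the drift identity $\cL|x|^2 = 2n - |x|^2$. The paper packages these into the single exact formula $16\int_\Sigma|\bH|^2\,\e^{-f} = 8n\int_\Sigma\e^{-f} - \int_\Sigma(|x|^2-2n)^2\,\e^{-f}$, which makes the equality case ($|x|^2\equiv 2n$) immediate without the detour through Lemma~\ref{p:basicH}; note that your second-moment identity already forces $R^2=2n$ once $|x|$ is constant, so the appeal to that lemma is unnecessary.
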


\begin{proof}
Using that $4\, |\bH|^2 = |x|^2 - |x^T|^2$ and $\cL \, |x|^2 = 2\,n-|x|^2$, we get 
\begin{align}
	16\, \int_{\Sigma} |\bH|^2 \, \e^{-f}  = 8\, n \, \int_{\Sigma} \e^{-f} - 4 \, \int_{\Sigma} |x^T|^2 \, \e^{-f} =8\, n \, \int_{\Sigma} \e^{-f} -  \int_{\Sigma}(|x|^2 - 2\,n)^2 \, \e^{-f} \, .
\end{align}
\end{proof}

 In the rest of this section $\Sigma^n \subset \RR^N$ is   closed.   Given $\Sigma^2$  with genus $\gamma$, define   $C_{YY}$ by
\begin{align}
	C_{YY} = 
	\begin{cases}
	2 &{\text{ if }} \gamma = 0  \\
	\gamma+3&{\text{ if }} \gamma > 0
	\end{cases} \, .
	\end{align}

\begin{Thm}	\label{t:stableWillmore}
If   $\Sigma$ is a $F$-stable closed  surface with genus $\gamma$, then 
 \begin{align}	\label{e:stableWillmore}
	W(\Sigma) \leq  
	\begin{cases}
		  \frac{2\,C_{YY} }{\e}  & {\text{ if $\Sigma$ is oriented.}} \\
	  \frac{4\,C_{YY} }{\e} & {\text{ if $\Sigma$ is unoriented.}} 
	  \end{cases}
\end{align} 
\end{Thm}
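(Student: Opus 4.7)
The plan is to combine $F$-stability of $\Sigma$ with the Hersch--Yang--Yau trick, using two inputs: the pointwise bound $|\bH|^2\,\e^{-f}\le 1/\e$ (from $|\bH|\le |x|/2$, giving $|\bH|^2\le f$, and $f\,\e^{-f}\le 1/\e$), and a conformal (branched) map $\psi\colon\Sigma\to\SS^2\subset\RR^3$ of controlled degree.

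By Yang--Yau, there exists a non-constant conformal map $\psi=(\psi_1,\psi_2,\psi_3)\colon\Sigma\to\SS^2$ of degree $d\le C_{YY}/2$ in the oriented case (with the unoriented case handled via the orientation double cover, doubling the bound). Hersch's balancing argument applied to the positive measure $d\nu=|\bH|^2\,\e^{-f}\,dA$ lets us precompose with a M\"obius transformation of $\SS^2$ so that
\begin{align*}
\int_\Sigma \psi_i\,|\bH|^2\,\e^{-f}\,dA=0, \quad i=1,2,3.
\end{align*}

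For each $i$ I consider the variation $\psi_i\bH$. By the balancing condition and the $L^2$-orthogonality of $\bH$ and every translation mode $V^{\perp}$ from \eqr{e:1point5}, $\psi_i\bH$ is orthogonal to $\bH$ but generally not to the translations. Let $W_i$ be the $L^2$-projection of $\psi_i\bH$ onto the span $\{V^{\perp}\colon V\in\RR^N\}$; then $u_i=\psi_i\bH-W_i$ is an allowable variation. Since $L\,W_i=\tfrac12 W_i$ by \eqr{e:1point5}, direct expansion gives
\begin{align*}
\delta^2(u_i)=\delta^2(\psi_i\bH)+\frac{1}{8\pi}\int|W_i|^2\,\e^{-f}.
\end{align*}
Combining $\delta^2(u_i)\ge 0$, Lemma \ref{l:phiVvar}, and the projection inequality $\int|W_i|^2\,\e^{-f}\le \int\psi_i^2\,|\bH|^2\,\e^{-f}$ yields
\begin{align*}
\int|\nabla\psi_i|^2\,|\bH|^2\,\e^{-f}\ge \tfrac12\int\psi_i^2\,|\bH|^2\,\e^{-f}.
\end{align*}

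Summing over $i$, using $\sum_i\psi_i^2\equiv 1$, the pointwise bound $|\bH|^2\,\e^{-f}\le 1/\e$, and the Dirichlet energy identity $\sum_i\int|\nabla\psi_i|^2\,dA=8\pi d$ for a degree-$d$ conformal map to $\SS^2$ gives $W(\Sigma)\le 4d/\e$, which becomes $2C_{YY}/\e$ (oriented) or $4C_{YY}/\e$ (unoriented). The hardest step is the translation-projection: without the $W_i$ correction one would get $W\le 2d/\e$, strictly sharper than the extremal $W(\SS^2_2)=4/\e$, so the factor $1/2$ in the stability inequality---precisely the $L$-eigenvalue of the translation modes---is essential and sharp.
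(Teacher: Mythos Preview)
Your proof is correct and follows essentially the same route as the paper: Hersch/Yang--Yau balancing for the measure $|\bH|^2\,\e^{-f}$, the translation-projection argument to turn $F$-stability into the inequality $\int |\nabla\psi_i|^2|\bH|^2\e^{-f}\ge \tfrac12\int \psi_i^2|\bH|^2\e^{-f}$, and then the pointwise bound $|\bH|^2\e^{-f}\le 1/\e$ together with conformal invariance of the Dirichlet energy. The only cosmetic difference is that the paper packages your projection step as a separate lemma (``if $\mu_{|\bH|^2}<\tfrac12$ then $\Sigma$ is $F$-unstable''), while you carry it out inline; the computations and the resulting inequality are identical.
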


 \vskip1mm
 Let $\Lambda$ be set of smooth functions $u$ on $\Sigma$ with $\int_{\Sigma} u \, |\bH|^2 \, \e^{-f} = 0$ and let $\Lambda^{\star} \subset \Lambda$ be the $u$'s with $\int_{\Sigma} u^2 \, |\bH|^2 \, \e^{-f} > 0$.
Define $\mu_{|\bH|^2} \geq 0$ by
 \begin{align}	\label{e:defmuH2}
 		\mu_{|\bH|^2} = \inf_{u \in \Lambda^{\star}} \, \, \frac{ \int_{\Sigma} |\nabla u|^2 \, |\bH|^2 \, \e^{-f} }{  \int_{\Sigma}  u^2 \, |\bH|^2 \, \e^{-f} } \, .
 \end{align} 
 When $|\bH| >0$,  then this infimum is achieved and $\mu_{|\bH|^2}$ is the first positive eigenvalue for the drift operator $\cL_{|\bH|^2} = \cL + \nabla_{\nabla \log |\bH|^2}$
  for the weight $\left| \bH \right|^2 \, \e^{-f}$.   

\begin{Lem}	\label{l:unstable12}
 If   $ \mu_{|\bH|^2} <  \frac{1}{2}$, then $\Sigma$ is $F$-unstable.
\end{Lem}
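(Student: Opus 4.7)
My plan is to use $u \, \bH$ as a test variation, with $u$ chosen near the infimum in \eqref{e:defmuH2}, and then to project off the translation directions to obtain a variation admissible for $F$-stability. The strict gap $\mu_{|\bH|^2} < \frac{1}{2}$ is exactly what is needed to ensure that this projection preserves negativity of the second variation.

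By the definition of $\mu_{|\bH|^2}$, I would first pick $u \in \Lambda^{\star}$ with
\begin{align}
\int |\nabla u|^2 \, |\bH|^2 \, \e^{-f} < \frac{1}{2} \int u^2 \, |\bH|^2 \, \e^{-f}.
\end{align}
Lemma \ref{l:phiVvar} together with \eqref{e:2varr} then gives
\begin{align}
\int \langle u\bH , L(u\bH) \rangle \, \e^{-f} = \int u^2 \, |\bH|^2 \, \e^{-f} - \int |\nabla u|^2 \, |\bH|^2 \, \e^{-f} > \frac{1}{2} \int u^2 \, |\bH|^2 \, \e^{-f},
\end{align}
so $\delta^2 (u\bH) < 0$. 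Since $u \in \Lambda$, the variation $u\bH$ is Gaussian $L^2$-orthogonal to $\bH$, but it need not be orthogonal to the finite-dimensional space $T = \{V^{\perp} : V \in \RR^N\}$ of translation directions.

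Next, I would orthogonally decompose $u\bH = W + V_T$ with $V_T \in T$ the Gaussian $L^2$-projection and $W \perp T$. By \eqref{e:1point5}, $T$ lies in the $\frac{1}{2}$-eigenspace of $L$, so $T$ is $L$-invariant and orthogonal to $\bH$; hence $W \perp \bH$ as well, and $W$ is admissible for $F$-stability. Self-adjointness of $L$ and $L V_T = \frac{1}{2} V_T$ give
\begin{align}
\int \langle u\bH , L (u\bH) \rangle \, \e^{-f} = \int \langle W , L W \rangle \, \e^{-f} + \frac{1}{2} \int |V_T|^2 \, \e^{-f}.
\end{align}
Since orthogonal projection is a contraction, $\int |V_T|^2 \, \e^{-f} \leq \int u^2 \, |\bH|^2 \, \e^{-f}$. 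Combining this with the lower bound on $\int \langle u\bH , L(u\bH) \rangle \, \e^{-f}$ yields $\int \langle W , L W \rangle \, \e^{-f} > 0$, that is, $\delta^2(W) < 0$. Moreover $W \ne 0$: if $W = 0$, then $u\bH \in T$ would be a $\frac{1}{2}$-eigenvector of $L$, forcing $\int \langle u\bH , L(u\bH) \rangle \, \e^{-f} = \frac{1}{2} \int u^2 |\bH|^2 \, \e^{-f}$, contradicting the strict inequality above. So $W$ is a nonzero admissible variation with $\delta^2(W) < 0$, and $\Sigma$ is $F$-unstable.

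The main subtlety is exactly the translation orthogonality of the test variation: the natural test field $u\bH$ is designed to be orthogonal to $\bH$ but not to $T$. The constant $\frac{1}{2}$ in the hypothesis is precisely calibrated to deal with this, since translations are $\frac{1}{2}$-eigenvectors of $L$, so subtracting the $T$-component of $u\bH$ raises $\int \langle \cdot , L \cdot \rangle \, \e^{-f}$ by at most $\frac{1}{2} \int u^2 |\bH|^2 \, \e^{-f}$, and the strict gap $\mu_{|\bH|^2} < \frac{1}{2}$ supplies exactly this much slack.
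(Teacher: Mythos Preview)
Your proof is correct and follows essentially the same approach as the paper: both choose $u \in \Lambda^{\star}$ witnessing the strict inequality, use Lemma~\ref{l:phiVvar} to get $\int \langle u\bH, L(u\bH)\rangle \e^{-f} > \tfrac{1}{2}\int u^2|\bH|^2 \e^{-f}$, subtract the Gaussian $L^2$-projection onto the translation space $T$ (which lies in the $\tfrac12$-eigenspace of $L$ and is orthogonal to $\bH$), and use $\|V_T\|_{L^2} \le \|u\bH\|_{L^2}$ to conclude $\delta^2(W)<0$. Your algebraic organization via the direct orthogonal splitting $\int\langle u\bH,L(u\bH)\rangle = \int\langle W,LW\rangle + \tfrac12\|V_T\|^2$ is slightly cleaner than the paper's cross-term computation, and your explicit check that $W\neq 0$ is a nice touch the paper leaves implicit.
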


\begin{proof}
Since $ \mu_{|\bH|^2} <  \frac{1}{2}$, there exists a function $u$ with
\begin{align} 
	\int |\nabla u|^2 \, \left| \bH \right|^2 \, \e^{-f}  &< \frac{1}{2} \, \int u^2 \, \left| \bH \right|^2 \, \e^{-f} \, , \label{e:eigla} \\
	\int u \, \left| \bH \right|^2 \, \e^{-f} &= 0 \, .
\end{align}
The  equality gives  that $u \, \bH$ is $L^2$-orthogonal to $\bH$.  Using \eqr{e:phiHvar} and \eqr{e:eigla}   gives
\begin{align}	\label{e:delt21}
	(4\,\pi)^{\frac{n}{2}} \, \delta^2 ( u \, \bH) =   \int \left[ |\nabla u|^2 - u^2 \right] \, \left| \bH \right|^2 \, \e^{-f}<  - \frac{1}{2} \, \int u^2\, \left| \bH \right|^2 \, \e^{-f} \, .
\end{align}
Let $V^{\perp}$ with $V \in \RR^N$ be the  $L^2$-projection of $u\bH$ to the space of translations.   Since $\bH$ is 
orthogonal to $V^{\perp}$, it follows that $u \bH - V^{\perp}$ is orthogonal to both $\bH$ and translations.  We will show that $\delta^2 (u \bH - V^{\perp}) < 0$.  
Using   \eqr{e:2varr},  \eqr{e:1point5}, symmetry of $L$, and \eqr{e:delt21}, we have
\begin{align}	 
	(4\,\pi)^{\frac{n}{2}}  \, \delta^2 &(u \bH - V^{\perp})  = -\int \langle (u \bH - V^{\perp}), L\,(u \bH)    \rangle\,\e^{-f} = (4\,\pi)^{\frac{n}{2}} \, \delta^2 (u \bH) + \int \langle   V^{\perp} , L\,(u \bH)    \rangle\,\e^{-f}    \notag \\
		&= (4\,\pi)^{\frac{n}{2}} \, \delta^2 (u \bH) + \frac{1}{2} \,  \int \langle   V^{\perp} ,  u \bH     \rangle\,\e^{-f} <  - \frac{1}{2} \, \int u^2\, \left| \bH \right|^2 \, \e^{-f}  + \frac{1}{2} \,  \int  \left| V^{\perp}   \right|^2 \,\e^{-f} \, . 
\end{align}
Since $\| V^{\perp} \|_{L^2} \leq \| u \bH \|_{L^2}$, it follows that $\Sigma$ is $F$-unstable.
\end{proof} 

Similarly, we  define higher $\mu_{k,|\bH|^2}$'s to be the infimum over $k$-dimensional families in \eqr{e:defmuH2}.

\begin{Cor}
If $\Sigma^n\subset \RR^N$ is $F$-stable, then 
 $
\mu_{|\bH|^2,N+1} \geq 1$.
\end{Cor}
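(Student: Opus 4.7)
The plan is to mimic the proof of Lemma \ref{l:unstable12}, but exploit the $N+1$ degrees of freedom in the test functions to kill the projection $V^{\perp}$ of $u\,\bH$ onto translations. Without this projection, the second variation formula \eqr{e:phiHvar} produces $\delta^2(u\,\bH) < 0$ as soon as the Rayleigh quotient $R(u) = \int |\nabla u|^2\,|\bH|^2\,\e^{-f} / \int u^2\,|\bH|^2\,\e^{-f}$ drops below $1$ rather than $\tfrac{1}{2}$, which is the source of the improved threshold.

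Concretely, I would argue by contradiction and assume $\mu_{|\bH|^2, N+1} < 1$. By the min-max characterization there exists an $(N+1)$-dimensional subspace $W \subset \Lambda$ on which $R(u) < 1$ for every nonzero $u \in W$. Consider the linear map $T : W \to \RR^N$ whose $i$-th coordinate is $\int \langle u\,\bH , E_i \rangle\,\e^{-f}$, where $E_1,\ldots,E_N$ is the standard basis of $\RR^N$. Since $\dim W = N+1 > N$, pick a nonzero $u \in \ker T$. Because $u \in \Lambda$, the section $u\,\bH$ is $L^2$-orthogonal to $\bH$, and because $u \in \ker T$ it is $L^2$-orthogonal to each translation field $E_i^{\perp}$ (using that $u\,\bH$ is already normal to $\Sigma$). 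Hence $u\,\bH$ is an allowable variation, and \eqr{e:phiHvar} with $\phi = u$ yields
\begin{align*}
(4\,\pi)^{\frac{n}{2}}\,\delta^2(u\,\bH) \;=\; \int \bigl[|\nabla u|^2 - u^2\bigr]\,|\bH|^2\,\e^{-f} \;<\; 0,
\end{align*}
contradicting $F$-stability.

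The only mildly delicate point is ensuring that a nonzero $u \in \ker T$ satisfies $u\,\bH \not\equiv 0$, so that the Rayleigh quotient bound actually applies (i.e.\ that $u$ lands in $\Lambda^{\star}$, not in $\Lambda \setminus \Lambda^{\star}$). This is immediate if $W$ is chosen as the span of honest eigenfunctions of the weighted operator $\cL_{|\bH|^2}$, which by definition lie in $\Lambda^{\star}$; more generally one uses that on a connected shrinker $|\bH|$ cannot vanish on an open set, so no nontrivial $u$ in a finite-dimensional eigenspace can be supported off $\{|\bH|>0\}$. Beyond this bookkeeping, the argument is just a dimension count plus a one-line application of \eqr{e:phiHvar}.
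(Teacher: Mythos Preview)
Your argument is correct and is essentially the same as the paper's: both proceed by contradiction, use the dimension count ($N+1$ test functions against the $N$ translation constraints) to produce a $\phi \in \Lambda$ with $\phi\,\bH$ orthogonal to $\bH$ and to all translations yet with Rayleigh quotient below $1$, and then invoke \eqr{e:phiHvar} to get $\delta^2(\phi\,\bH)<0$. The paper's version is terser and omits the $\Lambda^{\star}$ bookkeeping you flag, but the logic is identical.
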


\begin{proof}
Suppose not.  Since the space of translations is $N$-dimensional, 
 we can find a function $\phi$   so that $\phi\,\bH$ is orthogonal to   translations (and $\bH$) and, moreover, 
\begin{align} 
	\int |\nabla \phi|^2 \, \left| \bH \right|^2 \, \e^{-f}  &<   \int \phi^2 \, \left| \bH \right|^2 \, \e^{-f} \, .
\end{align}
 Stability implies that $\delta^2(\phi\,\bH)\geq 0$ which  contradicts  this and  \eqr{e:phiHvar}.
\end{proof}

\begin{Lem}	\label{l:maxh}
For all $r>0$, we have
$
	r^2\,\e^{-\frac{r^2}{4}}\leq \frac{4}{\e}
$, with equality if and only if $r=2$.
\end{Lem}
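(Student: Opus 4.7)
The plan is to reduce the inequality to a single-variable calculus problem by a convenient substitution. Setting $s = r^2/4$, the claimed inequality $r^2\,\e^{-r^2/4} \leq 4/\e$ is equivalent to $s\,\e^{-s} \leq 1/\e$ for all $s>0$, with equality precisely at $s=1$ (which corresponds to $r=2$).

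Next, I would analyze $g(s) = s\,\e^{-s}$ on $(0,\infty)$. Its derivative is $g'(s) = \e^{-s}(1-s)$, which is positive on $(0,1)$, zero at $s=1$, and negative on $(1,\infty)$. Hence $g$ attains its unique maximum on $(0,\infty)$ at $s=1$, with value $g(1) = 1/\e$. Translating back via $s = r^2/4$ yields the claimed inequality, and the equality case $s=1$ corresponds to $r=2$.

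There is no real obstacle here; the only thing to be careful about is ensuring the equality case is stated correctly (the maximum is strict away from $r=2$ since $g'$ changes sign there). Alternatively, one could differentiate $h(r) = r^2\,\e^{-r^2/4}$ directly to get $h'(r) = r\,\e^{-r^2/4}\bigl(2 - r^2/2\bigr)$, whose only positive zero is $r=2$, and evaluate $h(2) = 4/\e$; this gives the same conclusion without the substitution.
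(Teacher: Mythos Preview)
Your proof is correct and essentially the same as the paper's. The paper differentiates $h(r) = r^2\,\e^{-r^2/4}$ directly to get $h'(r) = 2r\,(1 - r^2/4)\,\e^{-r^2/4}$, which is exactly the alternative you describe at the end; your substitution $s = r^2/4$ is just a cosmetic repackaging of the same one-variable optimization.
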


\begin{proof}
Set $h(r)=r^2\,\e^{-\frac{r^2}{4}}$, then $h'(r)=2\,r\left(1-\frac{r^2}{4}\right)\,\e^{-\frac{r^2}{4}}$.   It follows that
$h(r)\leq h(2)$.    
\end{proof}

\begin{proof}[Proof of Theorem \ref{t:stableWillmore}]
We will assume first that $\Sigma$ is a topological sphere and roughly follow the argument of Hersch, \cite{He} (page $240$ in \cite{CM2}; cf. \cite{ChY}, \cite{CM1}).  Let  $g$ be the metric on $\Sigma$.     Since $\Sigma$ is a sphere, there is a  conformal diffeomorphism $\Phi : \Sigma \to \SS^2 \subset \RR^3$.  
The group of conformal transformations of $\SS^2$ contains a subgroup parametrized on the ball $B_1 \subset \RR^3$ with $z \in B_1$ corresponding to a ``dilation'' $\Psi_z$ in the direction of $\frac{z}{|z|}$ with $|z|$ determining the amount of the dilation (these are the $\psi (x,t)$'s on page 240 in \cite{CM1}).  As $|z| \to 1$, $\Psi_z$ takes $\SS^2 \setminus \{ \frac{-z}{|z|} \}$ to $\frac{z}{|z|}$.  Define a map $\cA: B_1 \to \RR^3$ by
\begin{align}
	\cA (z) = \frac{1}{\int_{\Sigma} |\bH|^2 \, \e^{-f}} \, \int_{\Sigma} \left( x_i \circ \Psi_z \circ \Phi \right) \,  |\bH|^2 \, \e^{-f} \, .
\end{align}
It follows that $\cA$ extends continuously to $\partial B_1 = \SS^2$ to be the identity on $\partial B_1$.  Elementary topology gives some $\bar{z} \in B_1$ so that $\cA (\bar{z}) = 0$.  Define $u_i$ on $\Sigma$ by
  $u_i = x_i \circ \Psi_{\bar{z}} \circ \Phi $ so that
 \begin{align}	\label{e:balanced}
 	\int_{\Sigma} u_i \,   |\bH|^2   \, \e^{-f} = 0 \, .
 \end{align}
Therefore, $F$-stability, \eqr{e:balanced} and Lemma \ref{l:unstable12} imply that for each $i$
\begin{align}
	 \int_{\Sigma}  u_i^2    \, |\bH|^2 \, \e^{-f}  \leq 2 \, \int_{\Sigma} |\nabla_g u_i |^2 \, |\bH|^2 \, \e^{-f}  \, .
\end{align}
Summing over $i$ and  using that $\sum_i u_i^2 \equiv 1$  gives
\begin{align}	\label{e:inhere}
	 4\pi \, W(\Sigma) = \int_{\Sigma}  |\bH|^2 \, \e^{-f} \leq  2 \, \sum_i \int_{\Sigma} |\nabla_g u_i |^2 \, |\bH|^2 \, \e^{-f}  \, .
\end{align}
Since $4\, |\bH|^2 \leq |x|^2$,  Lemma \ref{l:maxh} implies that  $|\bH|^2 \, \e^{-f}  \leq \e^{-1}$.  Using this in \eqr{e:inhere} and then conformal invariance of the energy    gives
\begin{align}	\label{e:inhere2}
	4\pi \, W(\Sigma)   \leq  \frac{2}{\e} \, \sum_i \int_{\Sigma} |\nabla_g u_i |^2  =  \frac{2}{\e} \, \sum_i \int_{\SS^2} |\nabla  x_i |^2   = \frac{16\, \pi}{\e}  
	  \, .
\end{align}
When $\Sigma$ is not a sphere, then we follow Yang-Yau, \cite{YY} (see also \cite{EI}, remark $1.2$ in \cite{Ka}), and replace $\Phi $ by a branched conformal map whose degree is bounded in the terms of $\gamma$.  The degree comes in as a factor in the equalities in \eqr{e:inhere2},  increasing the estimate for $W$.
\end{proof}

 \begin{proof}[Proof of Theorem \ref{t:stableWillmoreA}]
The case $\gamma =0$ of
 \eqr{e:stableWillmore}  gives the inequality.
 Suppose now that $\Sigma$ realizes equality.   First,  we must have equality in $4\, |\bH|^2 \leq |x|^2$ and, thus, $|x^T|^2 \equiv 0$ and $\Sigma$ is contained in a sphere.  We also get equality in 
 Lemma \ref{l:maxh} so this sphere has radius $2$.
 By Lemma \ref{p:basicH}, $\Sigma$ is minimal in $\partial B_{2} \subset \RR^N$.  Moreover, equality also implies that $\Sigma$ has 
   the same area as $\SS^2_2$  and, thus,  $\Sigma = \SS^2_2$ by Cheng-Li-Yau,  \cite{CgLY}.
    \end{proof}

\subsection{When $\mu_{|\bH|^2} = \frac{1}{2}$}

When we analyzed the case of equality in the bound for the Gaussian Willmore functional, one of the things that came out along the proof was that $\mu_{|\bH|^2} = \frac{1}{2}$ with 
multiplicity three and the eigenfunctions spanned the tangent space at each point.
We next analyze the borderline case where  $\cL_{|\bH|^2}$ has eigenvalue $\mu_{|\bH|^2} = \frac{1}{2}$ more generally.  Recall that the principal normal $ \bN = \frac{\bH}{|\bH|}$ is defined wherever $\bH \ne 0$.

\begin{Lem}	\label{l:h212}
If $\Sigma$ is $F$-stable and $\mu_{|\bH|^2} = \frac{1}{2}$, then for any eigenfunction $\phi$ of $\cL_{|\bH|^2}$ with eigenvalue $\frac{1}{2}$ there exists a vector $V \in \RR^N$ such that
$\phi\,\bH =V^{\perp}$  and
 $\nabla^{\perp}_{\nabla^T \phi}\bN=0$.
 \end{Lem}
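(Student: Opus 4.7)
The lemma contains two claims: (i) $\phi\,\bH = V^\perp$ for some $V\in\RR^N$, and (ii) $\nabla^\perp_{\nabla^T\phi}\bN = 0$ where $\bH\ne 0$. My plan is to derive (i) by pushing the argument of Lemma~\ref{l:unstable12} to its borderline and invoking $F$-stability to force equality in the projection inequality; then to derive (ii) by applying the drift Laplacian $\cL$ to the identity in (i) and expanding via the Leibniz rule together with the eigenvalue identities $L\,\bH = \bH$, $L\,V^\perp = \tfrac12 V^\perp$, and $\cL_{|\bH|^2}\phi = -\tfrac12 \phi$.

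For (i), I would begin by noting that since $\phi$ is a $\tfrac12$-eigenfunction of the operator $\cL_{|\bH|^2}$, which is self-adjoint for the weight $|\bH|^2\e^{-f}$, it is weighted-orthogonal to constants, i.e.\ $\langle \phi\bH, \bH\rangle_{L^2} = 0$. Next, the Rayleigh identity $\int |\nabla\phi|^2 |\bH|^2\e^{-f} = \tfrac12 \int \phi^2 |\bH|^2\e^{-f}$ together with \eqref{e:phiHvar} gives $(4\pi)^{n/2}\delta^2(\phi\bH) = -\tfrac12 \int \phi^2 |\bH|^2\e^{-f}$. I would then let $V^\perp$ denote the $L^2$-projection of $\phi\bH$ onto the space of translation sections $\{W^\perp : W\in\RR^N\}$, set $w = \phi\bH - V^\perp$, and observe that $w$ is orthogonal both to $\bH$ (using $\bH\perp_{L^2}$ translations, cf.\ the remark after \eqref{e:1point5}) and to every translation. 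Using symmetry of $L$, the identity $L\,V^\perp = \tfrac12 V^\perp$, and the projection property $\langle V^\perp, \phi\bH\rangle_{L^2} = \|V^\perp\|_{L^2}^2$, a direct expansion yields
\begin{equation*}
(4\pi)^{n/2}\delta^2(w) = (4\pi)^{n/2}\delta^2(\phi\bH) + \tfrac12 \|V^\perp\|_{L^2}^2 = \tfrac12 \bigl( \|V^\perp\|_{L^2}^2 - \|\phi\bH\|_{L^2}^2 \bigr).
\end{equation*}
The right side is $\leq 0$ by the projection inequality, while $F$-stability forces $\delta^2(w)\geq 0$; hence $\|V^\perp\|_{L^2} = \|\phi\bH\|_{L^2}$, which in turn forces $\phi\bH = V^\perp$ pointwise.

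For (ii), the next step is to apply $\cL$ to both sides of $\phi\bH = V^\perp$. The Leibniz rule used in \eqref{e:leibnVvar} gives $\cL(\phi\bH) = (\cL\phi)\bH + \phi\,\cL\bH + 2\,\nabla^\perp_{\nabla^T\phi}\bH$. Rewriting $\cL u = Lu - \tfrac12 u - \sum_{k,\ell}\langle u,A_{k\ell}\rangle A_{k\ell}$ and using \eqref{e:1point5} gives $\cL \bH = \tfrac12 \bH - \sum_{k,\ell}\langle\bH,A_{k\ell}\rangle A_{k\ell}$ and $\cL V^\perp = -\sum_{k,\ell}\langle V^\perp, A_{k\ell}\rangle A_{k\ell}$, which after substituting $V^\perp = \phi\bH$ becomes $-\phi\sum_{k,\ell}\langle\bH,A_{k\ell}\rangle A_{k\ell}$. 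The eigenvalue equation unwinds to $\cL\phi = -\tfrac12 \phi - \tfrac{\nabla^T|\bH|^2 \cdot \nabla^T\phi}{|\bH|^2}$. Inserting these three expressions into $\cL(\phi\bH) = \cL V^\perp$, the $\pm \tfrac{\phi}{2}\bH$ terms and the $\mp \phi\sum_{k,\ell}\langle\bH,A_{k\ell}\rangle A_{k\ell}$ terms cancel in pairs, leaving
\begin{equation*}
2\,\nabla^\perp_{\nabla^T\phi}\bH = \frac{\nabla^T|\bH|^2 \cdot \nabla^T\phi}{|\bH|^2}\,\bH
\end{equation*}
on the open set $\{\bH\ne 0\}$. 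Thus $\nabla^\perp_{\nabla^T\phi}\bH$ is parallel to $\bH$ there. The last step is to decompose $\nabla^\perp_X\bH = (\nabla_X|\bH|)\bN + |\bH|\,\nabla^\perp_X\bN$, where the second summand is orthogonal to $\bN$ because $|\bN|\equiv 1$; this shows that parallelism of $\nabla^\perp_{\nabla^T\phi}\bH$ to $\bH$ is equivalent to $\nabla^\perp_{\nabla^T\phi}\bN = 0$.

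The delicate step will be (i), where $F$-stability must be applied at precisely the borderline $\mu_{|\bH|^2} = \tfrac12$ and matched against the reverse projection inequality to pin down the pointwise identity $\phi\bH = V^\perp$. Step (ii) is then a bookkeeping calculation, and I expect it to collapse cleanly for a structural reason: the $A_{k\ell}$-contributions from $\cL\bH$ and from $\cL V^\perp$ cancel precisely because $V^\perp = \phi\bH$, so the only surviving term is the Leibniz cross-term $2\,\nabla^\perp_{\nabla^T\phi}\bH$, which must therefore be proportional to $\bH$.
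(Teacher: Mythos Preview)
Your proposal is correct and follows essentially the same route as the paper. Part (i) is identical: compute $\delta^2(\phi\bH)=-\tfrac12\|\phi\bH\|_{L^2}^2$ from the eigenvalue identity, project onto translations, and use $F$-stability on $w=\phi\bH-V^{\perp}$ to force $\|V^{\perp}\|_{L^2}=\|\phi\bH\|_{L^2}$ and hence $\phi\bH=V^{\perp}$. For part (ii) the paper applies $L$ rather than $\cL$: since $\phi\bH=V^{\perp}$ gives $L(\phi\bH)=\tfrac12\phi\bH$, the Leibniz rule $L(\phi\bH)=\phi\,L\bH+(\cL\phi)\bH+2\nabla^{\perp}_{\nabla^T\phi}\bH$ together with $L\bH=\bH$ and $\cL\phi=\cL_{|\bH|^2}\phi-|\bH|^{-2}\langle\nabla|\bH|^2,\nabla\phi\rangle$ collapses directly to $\tfrac12\phi\bH+2|\bH|\,\nabla^{\perp}_{\nabla^T\phi}\bN$, so the $A_{k\ell}$-terms never appear. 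Your version with $\cL$ introduces those terms on both sides and then cancels them --- equivalent, just one step longer.
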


\begin{proof}
Using \eqr{e:phiHvar},  integration by parts, and $\cL_{|\bH|^2} \, \phi = - \frac{1}{2} \, \phi$  gives
\begin{align}	 \label{e:thiabo}
	(4\,\pi)^{\frac{n}{2}} \, \delta^2 ( \phi \, \bH)     =   \int \left[ |\nabla \phi|^2 - \phi^2 \right] \, \left| \bH \right|^2 \, \e^{-f}=  - \frac{1}{2} \, \int \phi^2\, \left| \bH \right|^2 \, \e^{-f} \, .
\end{align}
Choose $V \in \RR^N$ so that $V^{\perp}$ is the $L^2$-projection of $\phi \, \bH$ to the space of translations.  
Since $V^{\perp}$ and $\phi\,\bH$ are orthogonal to $\bH$, it follows that $\phi \, \bH - V^{\perp}$ is orthogonal to both $\bH$ and translations. Thus, stability,  symmetry of $L$, $L \, V^{\perp} = \frac{1}{2} \, V^{\perp}$ and \eqr{e:thiabo} give
\begin{align}
	0& \leq (4\,\pi)^{\frac{n}{2}} \, \delta^2 (\phi \, \bH - V^{\perp})  = - \int_{\Sigma} \langle \phi \, \bH - V^{\perp} , L \, (\phi \, \bH)  \rangle \, \e^{-f} \notag \\
	&=- \frac{1}{2} \, \| \phi \, \bH \|_{L^2}^2 +   \frac{1}{2} \, \int_{\Sigma} \langle  V^{\perp} ,  \phi \, \bH   \rangle \, \e^{-f} =- \frac{1}{2} \, \| \phi \, \bH \|_{L^2}^2 +   \frac{1}{2} \,  \|  V^{\perp} \|_{L^2}^2
	  \, .
\end{align}
It follows that  $\| V^{\perp} \|_{L^2} = \| \phi \, \bH \|_{L^2}$ and, thus, that
 $\phi\,\bH=V^{\perp}$ and
 	$L\,(\phi\,\bH)=\frac{1}{2}\,\phi\,\bH$.  The second claim  follows from Leibniz' rule and $L\,(\phi\,\bH)=\frac{1}{2}\,\phi\,\bH$
\begin{align}
\frac{1}{2}\,\phi\,\bH &=L\,(\phi\,\bH) =\phi\,L\,\bH+(\cL\,\phi)\,\bH+ 2\, \nabla^{\perp}_{\nabla^T\phi}\bH\notag\\
&=(\phi+\cL_{|\bH|^2}\,\phi)\,\bH-|\bH|^{-2}\,\langle \nabla |\bH|^2,\nabla^T \phi\rangle\,\bH+ 2\, \nabla^{\perp}_{\nabla^T\phi}\bH =\frac{1}{2}\,\phi\,\bH  + 2\, |\bH|\,\nabla^{\perp}_{\nabla^T\phi}\bN \, .\notag
\end{align}
\end{proof}

\subsection{Frenet-Serret equations for shrinkers}

In $\RR^3$, the Frenet-Serret frame for a curve $\gamma$ parametrized by arclength is the orthonormal frame for $\RR^3$ along $\gamma$ consisting of the unit tangent $\gamma'$, the unit normal $\nn \equiv \frac{  \gamma''}{|{ \gamma''}|}$, and the binormal ${\bf{b}} \equiv \gamma' \times \nn$.    The Frenet-Serret formulas are ($k=| \gamma''|$):
\begin{align}
	\gamma'' &=   k\, \nn \, ,   \label{e:FS1} \\
	\nn' &= - k \, \gamma'    + \tau \, {\bf{b}} \, , \label{e:FS2}  \\
	{\bf{b}}' &= - \tau \, \nn \, ,   \label{e:FS3}
\end{align}
where $\tau$ is the torsion of $\gamma$.   
 We give an analog of these for  an oriented shrinker $\Sigma^n \subset \RR^{n+2}$.   
Let $J$ be the almost-complex structure of the (oriented) normal bundle.
Using $J$, we get a well-defined binormal   $\bB = J \, \bN$.   
Observe that $\langle \bB , x \rangle = \langle \bB , x^{\perp} \rangle = 2 \, \langle \bB , \bH \rangle = 0$, so that $\bB$ is always tangent to a sphere centered at $0$.  
We get the following
Frenet-Serret type formulas:
\begin{align}
	\nabla \bN &=     \tau \, \bB - \langle \bN , A(\cdot , \cdot) \rangle   \,  , \\
	\nabla \bB &= -\tau \, \bN - \langle \bB , A(\cdot , \cdot) \rangle 	\label{e:sfB2}
	\, .
\end{align}
It remains to compute the torsion.  Given a tangent vector $V$,  Lemma \ref{l:dH} gives
\begin{align}	\label{e:diffN}
	\tau (V) = \langle\nabla^{\perp}_V \, \bN,\bB\rangle = \langle\frac{ \nabla^{\perp}_V \, \bH }{|\bH|},\bB\rangle = 
		- \frac{1}{2} \, \langle\frac{ A (x^T , V)}{|\bH|},\bB\rangle \, .
\end{align}

\begin{Cor}   \label{c:frenet}
$\langle A , \bB \rangle = 0$ if and only if $\Sigma$ is a hypersurface in a hyperplane.
\end{Cor}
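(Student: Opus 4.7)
Both directions reduce almost mechanically to the Frenet--Serret equations \eqref{e:sfB2} and \eqref{e:diffN}: the hypothesis $\langle A,\bB\rangle = 0$ simultaneously kills the tangential component of $\nabla\bB$ (from \eqref{e:sfB2}) and the torsion $\tau$ (from \eqref{e:diffN}), leaving $\bB$ as a parallel, hence constant, vector in the ambient $\RR^{n+2}$.

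\emph{Forward direction.}  Suppose $\Sigma \subset \cV = \{x : \langle x,E\rangle = c\}$ with $E$ a unit normal to $\cV$.  For tangent vectors $X,Y$ the flatness of $\cV$ gives $\nabla_X Y \in T\cV = E^{\perp}$, whence $\langle A(X,Y),E\rangle \equiv 0$.  Tracing yields $\langle \bH,E\rangle = 0$, and combined with $\langle x^T,E\rangle = 0$ the shrinker equation forces $c = \langle x,E\rangle = 2\langle \bH,E\rangle = 0$.  Thus $\cV$ is linear, $\bN \parallel \bH \in E^{\perp}$, and $\bB = J\bN$ (the unit normal perpendicular to $\bN$ in the $2$-dimensional normal bundle) is parallel to $E$, so $\langle A,\bB\rangle = \pm \langle A,E\rangle \equiv 0$.

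\emph{Reverse direction.}  Assume $\langle A,\bB\rangle \equiv 0$ and work on the open set $U = \{|\bH|>0\}$ where $\bN$ and $\bB$ are defined.  Equation \eqref{e:diffN} immediately gives $\tau \equiv 0$ on $U$, and \eqref{e:sfB2} then collapses to $\nabla\bB \equiv 0$.  Hence $\bB$ is a constant vector $E \in \RR^{n+2}$ on each component of $U$.  Since $E = \bB$ is normal to $\Sigma$, the function $\langle x,E\rangle$ is locally constant on $U$, and its pointwise value equals $\langle x^{\perp},E\rangle = 2\langle \bH,\bB\rangle = 0$, so $U$ is contained in the linear hyperplane $E^{\perp}$.

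\emph{Main obstacle.}  The one real subtlety is extending from $U$ to all of $\Sigma$ with a single hyperplane.  By real-analyticity of shrinkers (which follows from the elliptic shrinker equation), either $\bH \equiv 0$, in which case $x^{\perp} = 2\bH = 0$ forces $\Sigma$ to be scaling-invariant and hence non-closed, or $\{\bH = 0\}$ is a nowhere-dense analytic set and $U$ is open and dense in $\Sigma$.  One then uses that $\bB = J\bN$ with $J$ the fixed almost-complex structure on the oriented normal bundle, together with continuity of $x$, to glue the constants $E$ from different components of $U$ into a single global vector; the conclusion $\Sigma \subset E^{\perp}$ then follows by continuity across $\{\bH = 0\}$.
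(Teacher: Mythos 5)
Your proposal is correct and follows essentially the same route as the paper, whose entire proof is the observation that by \eqr{e:sfB2} and \eqr{e:diffN} one has $\langle A , \bB \rangle = 0$ if and only if $\bB$ is a constant vector, which combined with the identity $\langle \bB , x \rangle = 2\,\langle \bB , \bH \rangle = 0$ places $\Sigma$ in a linear hyperplane. Your additional care about the set $\{ \bH = 0 \}$ goes beyond the paper, which implicitly works where $\bN$ and $\bB$ are defined and defers exactly this issue to unique continuation when the corollary is applied in Theorem \ref{c:h212}; for your gluing step the right tool is indeed analyticity of the function $\langle x , E \rangle$ (it vanishes on an open set, hence identically), rather than mere continuity of $x$ across the zero set of $\bH$.
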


\begin{proof}
By \eqr{e:sfB2} and \eqr{e:diffN}, $\langle A , \bB \rangle = 0$ if and only if $\bB$ is a constant vector.   
\end{proof}

\begin{Thm}	\label{c:h212}
If $\Sigma^2 \subset \RR^4$ is $F$-stable, closed, oriented, and $\mu_{|\bH|^2} = \frac{1}{2}$, then $\Sigma^2 = \SS^2_2$.
 \end{Thm}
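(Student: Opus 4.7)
The plan is to identify the $\tfrac{1}{2}$-eigenspace of $\cL_{|\bH|^2}$ with a concrete linear subspace $\cV_0 \subset \RR^4$ and then to use $F$-stability together with the Frenet--Serret formulae to force $\dim \cV_0 \geq 3$, which collapses $\Sigma$ into a hyperplane; \cite{CM3} then yields $\Sigma = \SS^2_2$. Set $\cV_0 = \{V \in \RR^4 : \langle V, \bB\rangle \equiv 0 \text{ on } \Sigma\}$. I would first show that the multiplicity of the eigenvalue $\tfrac{1}{2}$ for $\cL_{|\bH|^2}$ equals $\dim \cV_0$, with the eigenspace parametrized by $V \mapsto \phi_V := \langle V, \bN\rangle/|\bH|$. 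The direction $\phi \mapsto V \in \cV_0$ is Lemma \ref{l:h212} combined with the observation that $\phi \bH = V^{\perp}$ has no $\bB$-component. The converse follows by expanding $L(\phi_V \bH) = L V^{\perp} = \tfrac{1}{2} V^{\perp}$ via the Leibniz rule and Lemma \ref{l:dH}: the $\bN$-projection yields $\cL_{|\bH|^2} \phi_V = -\tfrac{1}{2} \phi_V$ once the drift correction from $\nabla^T \log |\bH|^2$ is absorbed, and the $\bB$-projection yields $\tau(\nabla^T \phi_V) = 0$ automatically.

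If $\dim \cV_0 \geq 3$, then $\cV_0^{\perp}$ is at most one-dimensional, and since $\bB(p) \in \cV_0^{\perp}$ pointwise with $\bB$ a continuous unit normal on the connected surface $\Sigma$, the binormal must be a constant vector. Corollary \ref{c:frenet} then places $\Sigma$ inside the hyperplane $\bB^{\perp} \cong \RR^3$; as a closed $F$-stable shrinker in codimension one, it is $\SS^2_2$ by \cite{CM3}. Thus it suffices to promote the a priori bound $\dim \cV_0 \geq 1$, guaranteed by $\mu_{|\bH|^2} = \tfrac{1}{2}$, to $\dim \cV_0 \geq 3$.

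Suppose $\dim \cV_0 \geq 2$ and choose linearly independent $\phi_1, \phi_2$ in the eigenspace. Lemma \ref{l:h212} gives $\tau(\nabla^T \phi_i) = 0$; on the open set where the gradients span $T\Sigma$ this forces $\tau = 0$, while parallelism of the gradients on the complement would yield a functional dependence of $\phi_1, \phi_2$ incompatible with linear independence at the same eigenvalue. Analyticity of the shrinker then gives $\tau \equiv 0$ on $\Sigma$. Differentiating $\langle V, \bB\rangle \equiv 0$ for $V \in \cV_0$ produces $\langle A(V^T, \cdot), \bB\rangle = -\phi_V |\bH| \tau \equiv 0$, so $V^T$ lies in the kernel of $\langle A(\cdot,\cdot), \bB\rangle$ for every $V \in \cV_0$; on the dense open set where $\{V^T : V \in \cV_0\}$ spans $T\Sigma$ we obtain $\langle A, \bB\rangle \equiv 0$, whence Corollary \ref{c:frenet} forces $\bB$ constant, reducing to the previous paragraph. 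To rule out $\dim \cV_0 = 1$, I would apply $F$-stability to the four-parameter family of test functions $u_V := \langle V, \bH\rangle/|\bH|^2 \in \Lambda$; the pointwise identity $\sum_i u_{V_i}^2 |\bH|^2 = 1$ for any orthonormal basis $\{V_i\}$ of $\RR^4$, together with the explicit Frenet--Serret expression for $\sum_i |\nabla u_{V_i}|^2 |\bH|^2$, combines with the Rayleigh bound $R(u_V) \geq \mu_{|\bH|^2} = \tfrac{1}{2}$ and a Hersch-style balancing to deliver a second independent $\tfrac{1}{2}$-eigenfunction.

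The main obstacle is the final step above: extracting from integrated $F$-stability the existence of a second $\tfrac{1}{2}$-eigenfunction when only one is a priori available. Once two independent eigenfunctions are in hand, the Frenet--Serret-based rigidity in the preceding paragraph cleanly forces $\bB$ to be constant and returns us to the codimension-one classification. Minor technical care is also needed where $\bH$ vanishes; one expects this to be handled either by a unique-continuation argument showing $\bH \neq 0$ everywhere on $\Sigma$ or by a limiting procedure near the zero set.
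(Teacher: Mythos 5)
There is a genuine gap, and you have essentially named it yourself: your entire endgame hinges on promoting $\dim \cV_0$ from $1$ to at least $2$ (really $3$), i.e.\ on manufacturing additional independent $\tfrac{1}{2}$-eigenfunctions of $\cL_{|\bH|^2}$ out of integrated $F$-stability, and the proposed mechanism (test functions $u_V=\langle V,\bH\rangle/|\bH|^2$ plus a ``Hersch-style balancing'') is not carried out and is not clearly available: stability only gives one-sided Rayleigh-quotient inequalities, and extracting an actual second eigenfunction would require an equality/rigidity analysis of the kind that appears in Theorem \ref{t:stableWillmoreA} only under sharper hypotheses. The intermediate $\dim\cV_0\geq 2$ branch also has unjustified steps: ``parallel gradients on the complement force a functional dependence incompatible with linear independence'' does not follow (functionally dependent functions can be linearly independent), the claim that $\{V^T:V\in\cV_0\}$ spans $T\Sigma$ on a dense open set is asserted rather than proved, and the analyticity of $\tau$ is delicate near the zero set of $\bH$ since $\tau$ carries $|\bH|$ in the denominator.

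The key point you are missing is that one eigenfunction already suffices, because of a pointwise linear-algebra fact you never exploit: in codimension two, $\langle A,\bB\rangle$ is a symmetric, trace-free $2\times 2$ form (trace-free since $\langle \bH,\bB\rangle=0$), so at each point it is either invertible or identically zero. The paper's proof takes the single $\phi$ with $\phi\,\bH=V^{\perp}$ from Lemma \ref{l:h212}, differentiates this identity, and projects onto $\bB$: Lemma \ref{l:dH} gives $\langle \nabla(\phi\,\bH),\bB\rangle=-\tfrac{\phi}{2}\,\langle A,\bB\rangle(x^T,\cdot)$ on one side and $\langle\nabla V^{\perp},\bB\rangle=-\langle A,\bB\rangle(V^T,\cdot)$ on the other, so $\tfrac{\phi}{2}\,x^T-V^T$ lies in the kernel of $\langle A,\bB\rangle$ at every point. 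If this vector field vanished identically then $\phi\,x\equiv 2V$ would be constant (the normal part vanishes automatically since $\phi\,\bH=V^{\perp}$ and $\bH=\tfrac12 x^{\perp}$), which is impossible; hence on some open set the kernel is nontrivial, so $\langle A,\bB\rangle\equiv 0$ there, and unique continuation together with Corollary \ref{c:frenet} puts $\Sigma$ in a hyperplane, after which the codimension-one classification of $F$-stable closed shrinkers finishes the argument exactly as in your last step. So your reduction ``$\bB$ constant $\Rightarrow$ hyperplane $\Rightarrow$ $\SS^2_2$'' matches the paper, but the route you propose to reach it cannot be completed as written, whereas the trace-free observation makes the multiplicity question (and the whole $\cV_0$ bookkeeping) unnecessary.
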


\begin{proof}  
We will show that $\langle A , {\bB} \rangle=0$ on an open set.  Once we have this,  Corollary \ref{c:frenet}  and unique continuation imply that $\Sigma$ is contained in a hyperplane and then \cite{CM1} and $F$-stability give that it is spherical or planar.
 Let $\phi$ be an eigenfunction as in   Lemma  \ref{l:h212}, so   $\phi \, \bH = V^{\perp}$ for  $V \in \RR^4$.
 Differentiating gives 
\begin{align}
	- \frac{\phi}{2} \, \langle A , {\bB} \rangle (x^T , \cdot) = \langle \nabla \, (\phi \, \bH) , \bB \rangle = \langle \nabla \, V^{\perp} , \bB \rangle = - \langle A , {\bB} \rangle (V^T , \cdot ) \, .
\end{align}
It follows that $  \frac{\phi}{2} \, x^T - V^T $ is in the kernel of $\langle A , {\bB} \rangle$ at each point.  If $\  \frac{\phi}{2} \, x^T - V^T $ vanishes everywhere, then so does $  \frac{\phi}{2} \, x - V  $ and, thus, 
$\phi \, x = 2\, V$ is constant. This is impossible, so there must be an open set $\Omega$ where $  \frac{\phi}{2} \, x^T - V^T  \ne 0$.  
However, the two by two matrix $\langle A , {\bB} \rangle$ is symmetric and trace-free, so it is either invertible or zero.  Since it has nontrivial kernel in $\Omega$, we see that $\langle A , {\bB} \rangle \equiv 0$ in $\Omega$.  This completes the proof.
\end{proof}

 \section{Entropy bounds}
 
 In this section we prove eigenvalue and entropy bounds without assuming that the dimension $N$ of the ambient Euclidean space is large compared with the entropy of the shrinker.   
 
 \begin{Thm}	\label{t:bdmu}
If $\Sigma^n \subset \RR^N$ is   $F$-stable  with finite entropy and $N \geq 2n$, then 
$
 \mu_{2\,n\,N}\geq \frac{1}{4}$.
 \end{Thm}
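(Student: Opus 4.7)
The plan is to argue by contradiction. Suppose $\mu_{2nN}<\tfrac14$, and let $\phi_0,\phi_1,\ldots,\phi_{2nN}$ be $L^2(\e^{-f})$-orthonormal eigenfunctions of $\cL$ with $\cL\phi_i=-\mu_i\phi_i$ and $\mu_i<\tfrac14$. Fix an orthonormal basis $E_1,\ldots,E_N$ of $\RR^N$ and consider the candidate normal variations $\{\phi_iE_\alpha^{\perp}\}_{i,\alpha}$, spanning a subspace $\mathcal{W}$ of normal sections. The goal is to show that $\delta^2|_{\mathcal{W}}$ has strictly more than $N+1$ negative directions; since $F$-stability bounds the total $\delta^2$-index by $N+1$, coming from the known negative directions $\bH$ (with $L\bH=\bH$) and the $N$-parameter family of translations $V^{\perp}$ (with $LV^{\perp}=\tfrac12V^{\perp}$), this produces the desired contradiction.

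The core calculation is Lemma~\ref{l:phiVvar}, which for each pair $(i,\alpha)$ gives
$(4\pi)^{n/2}\,\delta^2(\phi_iE_\alpha^{\perp})=\int\bigl[|\nabla\phi_i|^2-\tfrac12\phi_i^2\bigr]\,|E_\alpha^{\perp}|^2\,\e^{-f}.$
Summing over $\alpha$ and using the identity $\sum_\alpha|E_\alpha^{\perp}|^2=\mathrm{tr}(P^{\perp})=N-n$ together with the eigenfunction relation $\int|\nabla\phi_i|^2\e^{-f}=\mu_i\int\phi_i^2\e^{-f}$ gives $\sum_\alpha\delta^2(\phi_iE_\alpha^{\perp})=(N-n)(\mu_i-\tfrac12)<-(N-n)/4$. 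Summing again over $i\in\{0,\ldots,2nN\}$, the coordinate trace of $\delta^2$ in the basis $\{\phi_iE_\alpha^{\perp}\}$ satisfies $\sum_{i,\alpha}\delta^2(\phi_iE_\alpha^{\perp})\le -(2nN+1)(N-n)/4$. Because the abstract basis $\{\phi_i\otimes E_\alpha\}$ of $\mathcal{E}\otimes\RR^N$ (with $\mathcal{E}=\mathrm{span}\{\phi_i\}$) is orthonormal in the product inner product, this coordinate trace coincides with the intrinsic trace of the pullback bilinear form along the linear map $\Psi:\mathcal{E}\otimes\RR^N\to L^2(N\Sigma)$, $\Psi(\phi\otimes V)=\phi V^{\perp}$.

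Dividing this negative trace by a uniform upper bound on the operator norm of the pullback form---derived from $\mu_i<\tfrac14$, $|E_\alpha^{\perp}|\le 1$, and Cauchy--Schwarz---forces at least $\Omega((2nN+1)(N-n))$ eigenvalues of $\delta^2$ on the image of $\Psi$, hence on $\mathcal{W}$, to be negative. The hypothesis $N\ge 2n$ ensures $N-n\ge n$, so the count is of order $nN^2$ and easily exceeds $N+1$, yielding the contradiction. The main technical obstacle is the uniform operator-norm bound: off-diagonal entries of the pullback form, such as $\int\langle\nabla\phi_i,\nabla\phi_k\rangle\langle E_\alpha^{\perp},E_\beta^{\perp}\rangle\,\e^{-f}$, mix the eigenfunction factors with the pointwise function $\langle E_\alpha^{\perp},E_\beta^{\perp}\rangle$ on $\Sigma$ and cannot be controlled by simple orthogonality in either factor. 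A careful Cauchy--Schwarz argument that exploits the specific eigenvalue bound $\mu_i<\tfrac14$ (together with $|E_\alpha^{\perp}|\le|E_\alpha|=1$) is needed to keep $\|\tilde B\|_{\mathrm{op}}$ bounded by a universal constant, so that the trace estimate translates into the required count of negative eigenvalues.
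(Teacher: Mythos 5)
Your reduction of the statement to an index count, and your diagonal computation, are fine as far as they go: the pointwise identity $\sum_{\alpha}|E_{\alpha}^{\perp}|^2 = N-n$ and Lemma \ref{l:phiVvar} do give $\sum_{\alpha}\delta^2(\phi_i E_{\alpha}^{\perp}) = (N-n)\,(\mu_i - \tfrac12)$ for each normalized eigenfunction, and a negative-definite subspace of dimension $N+2$ would indeed contradict $F$-stability. But the step you yourself flag as the ``main technical obstacle'' is a genuine gap, not a routine Cauchy--Schwarz: to convert a negative trace into a count of negative eigenvalues you need a \emph{lower} bound $\tilde B \geq -\Lambda$ on the pulled-back form, i.e.\ a universal upper bound on $\int \langle u , L\,u\rangle\,\e^{-f}$ for unit-norm sections $u = \sum_{\alpha} u_{\alpha}\,E_{\alpha}^{\perp}$ with mixed directions. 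The clean, $A$-free identity of Lemma \ref{l:phiVvar} holds only for \emph{pure} variations $\phi\,V^{\perp}$ with a single constant vector $V$; for mixed sections the Leibniz rule leaves cross terms of the form $\int u_{\alpha}\,\langle E_{\alpha}^{\perp} , A(\nabla u_{\beta} , E_{\beta}^{T})\rangle\,\e^{-f}$ (equivalently, the $\sum_{k,\ell}\langle \cdot , A_{k\ell}\rangle A_{k\ell}$ term in $L$ does not cancel), and these involve the second fundamental form, for which the hypotheses (finite entropy, $F$-stability; $\Sigma$ not assumed closed) give no pointwise or even weighted $L^2$ control. So $\|\tilde B\|_{\mathrm{op}}$ cannot be bounded by a universal constant using only $\mu_i < \tfrac14$ and $|E_{\alpha}^{\perp}|\leq 1$, and without that bound the trace argument does not produce any negative eigenvalues. (Even granting such a bound, the count $(2nN+1)(N-n)/(4\Lambda)$ versus $N+1$ is lossy for small $N$ and $n$.)

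The paper avoids this entirely by never leaving the ``diagonal'': Lemma \ref{p:Fstable} picks a \emph{single} function $\phi$, constrains it by the finitely many linear conditions $\int\phi\,\langle E_j^{\perp},\bH\rangle\,\e^{-f}=0$ and $\int\phi\,\langle E_j^{\perp},E_{\ell}^{\perp}\rangle\,\e^{-f}=0$ so that each pure variation $\phi\,E_j^{\perp}$, $j\leq n+k$, is admissible, applies stability to each one via Lemma \ref{l:phiVvar}, and sums using $k \leq \sum_{j\leq n+k}|E_j^{\perp}|^2$ (Lemma \ref{l:linearalg1}) to get the Poincar\'e-type inequality $\int\phi^2\,\e^{-f}\leq 2(\tfrac nk+1)\int|\nabla\phi|^2\,\e^{-f}$. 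Since the number of constraints is $(n+k)(N-\tfrac12(n+k-3))$, such a $\phi$ can be taken in the span of the first $(n+k)(N-\tfrac12(n+k-3))+1$ eigenfunctions of $\cL$, which forces $\mu_{(n+k)(N-\frac12(n+k-3))}\geq \frac{k}{2(n+k)}$ (Corollary \ref{t:bdmu2}); taking $k=n$ gives the theorem. If you want to rescue your scheme, you would need either an a priori bound on the mixed $A$-terms (not available here) or to restrict to test sections on which those terms vanish --- which is in effect what the paper's choice of a single constrained $\phi$ accomplishes.
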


\begin{Cor}	\label{t:closedsurf}
There is a universal constant $C$ so that
if $\Sigma^2 \subset \RR^N$ is   closed and  $F$-stable   of genus $\gamma$, then 
$
\lambda (\Sigma) \leq C \, (1+\gamma)\,N$.
\end{Cor}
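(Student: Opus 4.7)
The statement will follow by direct combination of Theorem~\ref{t:bdmu} with Proposition~\ref{p:kor}, applied in dimension $n=2$.

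First, assume $N \geq 2n = 4$. Then Theorem~\ref{t:bdmu} applies and gives
\begin{align}
\mu_{4N}(\Sigma) \geq \frac{1}{4} \, .
\end{align}
On the other hand, since $\Sigma^2$ is closed with genus $\gamma$, Proposition~\ref{p:kor} applied with $k = 4N$ yields the complementary upper bound
\begin{align}
\mu_{4N}(\Sigma) \, \lambda(\Sigma) \leq C \,(1+\gamma)\, (4N) \, ,
\end{align}
where $C$ is the universal constant from that proposition. Dividing by $\mu_{4N}(\Sigma)$ and inserting the lower bound $\mu_{4N}(\Sigma) \geq 1/4$ gives
\begin{align}
\lambda(\Sigma) \leq \frac{4 C (1+\gamma) \, (4N)}{1} = 16\, C\,(1+\gamma)\, N \, ,
\end{align}
which is the desired estimate after relabeling the universal constant.

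It remains to handle the exceptional range $N < 2n = 4$, i.e.\ $N = 3$. In this case $\Sigma^2$ is a closed $F$-stable hypersurface in $\RR^3$, and by the hypersurface classification in \cite{CM3} it must be $\SS^2_{\sqrt{2}}$, for which the entropy is an explicit universal constant. Enlarging $C$ if necessary to absorb this finite list of cases yields $\lambda(\Sigma) \leq C(1+\gamma)N$ in all cases.

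\textbf{Main obstacle.} The substantive work has already been done: Theorem~\ref{t:bdmu} provides a pigeonhole-type lower bound on a high eigenvalue using $F$-stability (roughly, $F$-stability rules out too many low eigenvalues of $\cL$ via the $V^\perp$ construction), while Proposition~\ref{p:kor} supplies Korevaar-type upper bounds $\mu_k \lesssim \frac{(1+\gamma)k}{\lambda}$ by comparing $\cL$ with the Laplacian of the conformal metric $\e^{-f} g$. Once both are available, the corollary is a one-line squeeze; the only genuine care needed is ensuring the index $k = 4N$ lies in the valid range of Theorem~\ref{t:bdmu}, which is exactly the hypothesis $N \geq 2n$.
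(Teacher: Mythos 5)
Your proposal is correct and takes essentially the same route as the paper: the paper combines the eigenvalue lower bound from $F$-stability (Corollary \ref{t:bdmu2} with $n=2$, giving $\mu_{2(2N-1)} \geq \tfrac{1}{4}$) with the Korevaar-type bound of Proposition \ref{p:kor}, which is exactly the squeeze you perform using Theorem \ref{t:bdmu}. Your explicit handling of the small-$N$ case via the hypersurface classification is a harmless addition that the paper leaves implicit.
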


In the next lemma, $E_i$ is an orthonormal basis for $\RR^N$.

\begin{Lem}		\label{l:linearalg1}
If $\cV \subset \RR^N$ is an $n$-dimensional linear subspace, $\Pi$ and $\Pi^{\perp}$ are orthogonal projections to $\cV$ and $\cV^{\perp}$,   then for any  $k\in \ZZ $ with $1\leq k \leq N-n$
\begin{align}	\label{e:linearalg1}
	 k \leq \sum_{i=1}^{n+k} \,  \left| \Pi^{\perp} (E_i) \right|^2 \leq n+k\, .
\end{align}
\end{Lem}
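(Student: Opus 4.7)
The plan is to reduce both inequalities to the single trace identity $\sum_{i=1}^{N} |\Pi(E_i)|^2 = n$, which holds because $\Pi$ is the orthogonal projection onto an $n$-dimensional subspace and so $\Pi$ satisfies $\Pi^2 = \Pi = \Pi^T$, giving $\sum_i |\Pi(E_i)|^2 = \sum_i \langle \Pi E_i, \Pi E_i \rangle = \sum_i \langle E_i, \Pi E_i \rangle = \mathrm{tr}\,\Pi = \dim \cV = n$.

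First I would rewrite the summand using orthogonality of $\Pi$ and $\Pi^{\perp}$. Since $E_i = \Pi(E_i) + \Pi^{\perp}(E_i)$ and the two pieces are orthogonal, the Pythagorean identity gives
\begin{equation}
	|\Pi^{\perp}(E_i)|^2 = 1 - |\Pi(E_i)|^2 \, .
\end{equation}
Summing over $i = 1, \dots, n+k$ yields
\begin{equation}
	\sum_{i=1}^{n+k} |\Pi^{\perp}(E_i)|^2 = (n+k) - \sum_{i=1}^{n+k} |\Pi(E_i)|^2 \, .
\end{equation}

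For the upper bound, each term $|\Pi(E_i)|^2 \geq 0$, so dropping the subtracted sum gives the bound $n+k$. For the lower bound, I would use that $\sum_{i=1}^{n+k} |\Pi(E_i)|^2 \leq \sum_{i=1}^{N} |\Pi(E_i)|^2 = n$ by the trace identity noted above, which after substitution gives $\sum_{i=1}^{n+k} |\Pi^{\perp}(E_i)|^2 \geq (n+k) - n = k$.

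There is no real obstacle here; the only thing to be careful about is verifying the trace identity for an arbitrary (not necessarily coordinate-aligned) projection, which is immediate from $\mathrm{tr}\,\Pi$ being basis-independent and equal to the dimension of the image. The constraint $1 \leq k \leq N - n$ ensures the index set $\{1, \dots, n+k\}$ is a nontrivial proper subset of $\{1, \dots, N\}$, so that the step dropping terms from the full sum over $i=1,\dots,N$ down to $i=1,\dots,n+k$ is valid.
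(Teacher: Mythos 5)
Your proof is correct and follows essentially the same route as the paper: both reduce the claim to the basis-independent trace identity $\sum_{i=1}^{N}|\Pi(E_i)|^2 = n$ (the paper verifies it by choosing an adapted basis, you via $\Pi^2=\Pi=\Pi^T$), then use $|\Pi^{\perp}(E_i)|^2 = 1-|\Pi(E_i)|^2$ and monotonicity of the partial sum for the lower bound, with the upper bound immediate.
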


\begin{proof}
Since
$ \sum_{i=1}^{N} \left| \Pi (E_i) \right|^2$ is the trace of a quadratic form, it is independent of the choice of basis.  Choosing the basis $\bar{E}_i$ so that $\bar{E}_1 , \dots , \bar{E}_n \in \cV$ and the rest are in $\cV^{\perp}$, we see that
\begin{align}
	\sum_{i=1}^{N} \, \left| \Pi (E_i) \right|^2 = \sum_{i=1}^{N} \, \left| \Pi (\bar{E}_i) \right|^2 = n \, .
\end{align}
Using this, we see that
\begin{align}
	\sum_{i=1}^{n+k} \, \left| \Pi^{\perp} (E_i) \right|^2 &= \sum_{i=1}^{n+k} \left( 1- \left| \Pi  (E_i) \right|^2 \right) = (n+k) - \sum_{i=1}^{n+k} \, \left| \Pi (E_i) \right|^2 
		 \geq (n+k) -  \sum_{i=1}^{N} \, \left| \Pi (E_i) \right|^2 = k \, . \notag
\end{align}
This gives the first inequality in \eqr{e:linearalg1}.
The second inequality is immediate. 
\end{proof}

\begin{Lem}	\label{p:Fstable}
Suppose that $\Sigma^n \subset \RR^N$ is $F$-stable, $k\in \ZZ $ and $1\leq k \leq N-n$.  If $\phi \in L^2$ is a function  so that 
\begin{align}	\label{e:ortho1}
	\int_{\Sigma} \phi \, \langle E_j^{\perp}  , \bH \rangle\, \e^{-f} &= 0 {\text{ for  $j=1, \dots , n+k$}} , \\
	\int_{\Sigma} \phi \, \langle E_j^{\perp}  , E_{\ell}^{\perp} \rangle\, \e^{-f} &= 0 {\text{ for  $(j,\ell) \in \{ 1 , \dots , n+k \} \times \{ 1, \dots , N\}$}}  \, , \label{e:ortho2}
\end{align}
then  $\int_{\Sigma} \phi^2 \, \e^{-f} \leq 2\,\left(\frac{n}{k}+1\right) \, \int_{\Sigma} |\nabla \phi |^2 \, \e^{-f}$.
\end{Lem}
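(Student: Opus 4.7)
The plan is to test $F$-stability on the family of admissible variations $\phi\,E_j^{\perp}$ for $j = 1,\dots,n+k$, sum the resulting inequalities, and then use the two-sided bounds on $\sum_j |E_j^{\perp}|^2$ supplied by Lemma \ref{l:linearalg1} on the two sides of the inequality to extract a Poincar\'e-type estimate.

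First I would verify admissibility. For each $j \in \{1,\dots,n+k\}$ the hypothesis \eqref{e:ortho1} says $\int \langle \phi\,E_j^{\perp},\bH\rangle\,\e^{-f} = 0$, so $\phi\,E_j^{\perp}$ is Gaussian-$L^2$ orthogonal to $\bH$. Since $\{E_{\ell}^{\perp}\}_{\ell=1}^{N}$ spans the space of translations restricted to $\Sigma$, the hypothesis \eqref{e:ortho2} shows that $\phi\,E_j^{\perp}$ is orthogonal to every translation variation. Hence $\phi\,E_j^{\perp}$ is an $F$-admissible variation, and $F$-stability together with Lemma \ref{l:phiVvar} (equation \eqref{e:phiVvar}) gives, for every $j$,
\begin{align}
0 \;\leq\; \delta^2(\phi\,E_j^{\perp}) \;=\; (4\pi)^{-\frac{n}{2}} \int \Bigl[|\nabla \phi|^2 - \tfrac{1}{2}\,\phi^2\Bigr]\,|E_j^{\perp}|^2\,\e^{-f}.
\end{align}

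Next I would sum these $n+k$ inequalities to obtain
\begin{align}
\tfrac{1}{2}\,\int \phi^2\,\Bigl(\textstyle\sum_{j=1}^{n+k}|E_j^{\perp}|^2\Bigr)\,\e^{-f} \;\leq\; \int |\nabla \phi|^2\,\Bigl(\textstyle\sum_{j=1}^{n+k}|E_j^{\perp}|^2\Bigr)\,\e^{-f}.
\end{align}
Applying Lemma \ref{l:linearalg1} pointwise with $\cV = T_p\Sigma$ (so that $\Pi^{\perp}$ is projection onto the normal space) gives $k \leq \sum_{j=1}^{n+k}|E_j^{\perp}|^2 \leq n+k$. Using the lower bound on the left-hand side and the upper bound on the right-hand side yields
\begin{align}
\tfrac{k}{2}\,\int \phi^2\,\e^{-f} \;\leq\; (n+k)\,\int |\nabla \phi|^2\,\e^{-f},
\end{align}
which is exactly the claimed inequality $\int \phi^2\,\e^{-f} \leq 2(\tfrac{n}{k}+1)\int |\nabla \phi|^2\,\e^{-f}$.

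The only subtle point is the simultaneous use of the two sides of Lemma \ref{l:linearalg1}: the trick is that one is free to choose the stronger bound on each side of the summed stability inequality, which is what converts an a priori weighted inequality into a Poincar\'e inequality with the weight removed. No deeper obstacle appears, since admissibility is forced by exactly the orthogonality hypotheses \eqref{e:ortho1}--\eqref{e:ortho2} and the identity $L\,V^{\perp} = \tfrac{1}{2}V^{\perp}$ from \eqref{e:1point5} is already packaged into Lemma \ref{l:phiVvar}.
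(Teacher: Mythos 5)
Your proposal is correct and follows the paper's own argument essentially verbatim: test $F$-stability on the admissible variations $\phi\,E_j^{\perp}$ via Lemma \ref{l:phiVvar}, sum over $j\leq n+k$, and apply the pointwise bounds $k\leq \sum_j |E_j^{\perp}|^2\leq n+k$ from Lemma \ref{l:linearalg1} to the two sides. The only difference is that you spell out the rearrangement and the pointwise application of Lemma \ref{l:linearalg1} with $\cV=T_p\Sigma$, which the paper leaves implicit.
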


\begin{proof}
By \eqr{e:ortho1} and \eqr{e:ortho2}, the vector field $\phi \, E_j^{\perp}$ is orthogonal to $\bH$ and to translations for each $j=1, \dots , n+k$.  By the definition of $F$-stability and Lemma \ref{l:phiVvar}
\begin{align}	\label{e:fromLem}
	0 \leq (4\,\pi)^{\frac{n}{2}} \, \delta^2 (\phi \, E_j^{\perp}) = \int \left[ |\nabla \phi |^2 -  \frac{1}{2} \, \phi^2 \right] \, \left| E_j^{\perp} \right|^2 \, \e^{-f}  \, .
\end{align}
Finally, we sum \eqr{e:fromLem} over $j\leq n+k$ and use 
  $k\leq \sum_{j=1}^{n+k}  \left| E_j^{\perp} \right|^2 \leq n+k$  by Lemma \ref{l:linearalg1}.
\end{proof}

\begin{Cor}	\label{t:bdmu2}
If $\Sigma^n \subset \RR^N$ has $F$-index $I$, then for any  $k \in \ZZ$ with $1\leq  k\leq N-n$ 
\begin{align}
 \mu_{(n+k)\,(N+I-\frac{1}{2}\,(n+k-3))}\geq \frac{k}{2\,(n+k)}\, .
\end{align}
\end{Cor}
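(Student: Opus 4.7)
The plan is to extend Lemma~\ref{p:Fstable} from $F$-stable ($I=0$) to $F$-index $I$ shrinkers, and then combine the resulting Poincar\'e-type inequality with the min--max characterization of the eigenvalues of $\cL$.

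First, I would determine, for a scalar $\phi$, how many linear conditions make $\phi\,E_j^\perp$ admissible for every $j=1,\dots,n+k$ --- that is, $L^2$-orthogonal to $\bH$, to all translations $E_\ell^\perp$ ($\ell=1,\dots,N$), and to a fixed basis $W_1,\dots,W_I$ of the $I$-dimensional $L$-unstable subspace $U$. The $\bH$ conditions \eqr{e:ortho1} contribute $n+k$ constraints; since $\langle E_j^\perp,E_\ell^\perp\rangle$ is symmetric in $(j,\ell)$, the distinct translation conditions \eqr{e:ortho2} amount to
\[
\tfrac{(n+k)(n+k+1)}{2}+(n+k)(N-n-k)=(n+k)\bigl(N-\tfrac{n+k-1}{2}\bigr),
\]
and the unstable subspace adds another $(n+k)I$. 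The grand total is at most $M:=(n+k)(N+I-\tfrac12(n+k-3))$ linear conditions on $\phi$, which matches the subscript in the statement.

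Next I would invoke min--max. Let $u_0,\dots,u_M$ be the first $M+1$ eigenfunctions of $\cL$. By linear algebra there is a nonzero $\phi=\sum_{i=0}^{M} a_i u_i$ satisfying all the constraints. The diagonal translation conditions $\int \phi\,|E_j^\perp|^2 e^{-f}=0$ preclude $\phi$ from being a nonzero constant, since Lemma~\ref{l:linearalg1} provides some $j$ with $\int|E_j^\perp|^2 e^{-f}>0$; hence $\int|\nabla\phi|^2 e^{-f}>0$. Because $\phi E_j^\perp$ is orthogonal to $\bH$, to translations, and to $U$, the definition of $F$-index $I$ gives $\delta^2(\phi E_j^\perp)\ge 0$. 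Applying \eqr{e:phiVvar}, summing over $j=1,\dots,n+k$, and using the sandwich $k\le\sum_j|E_j^\perp|^2\le n+k$ from Lemma~\ref{l:linearalg1} exactly as in the proof of Lemma~\ref{p:Fstable}, I obtain
\[
\int|\nabla\phi|^2 e^{-f}\ \ge\ \tfrac{k}{2(n+k)}\int\phi^2\, e^{-f}.
\]

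Finally, since $\phi$ lies in the span of $u_0,\dots,u_M$ and $\mu_0\le\dots\le\mu_M$, its Rayleigh quotient equals $\sum a_i^2\mu_i/\sum a_i^2\le\mu_M$, forcing $\mu_M\ge\tfrac{k}{2(n+k)}$ as claimed. The only non-routine point is promoting Lemma~\ref{p:Fstable} to $F$-index $I$: this is a one-line observation that the $(n+k)I$ additional orthogonalities against $U$ restore $\delta^2\ge 0$ on the admissible variations $\phi E_j^\perp$, and it is precisely the source of the $+\,I$ in the final index.
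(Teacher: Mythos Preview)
Your proposal is correct and follows essentially the same route as the paper: count the linear constraints \eqr{e:ortho1}--\eqr{e:ortho2} (plus the $(n+k)I$ extra orthogonalities against an $L$-unstable basis when $I>0$), pick a nonzero $\phi$ in the span of $u_0,\dots,u_M$ satisfying them, apply \eqr{e:phiVvar} and Lemma~\ref{l:linearalg1} to get the Poincar\'e inequality, and finish with the Rayleigh quotient. The paper treats $I=0$ in detail and dismisses $I>0$ as ``obvious modifications''; your explicit handling of the unstable subspace and your check that $\phi$ cannot be a nonzero constant are welcome clarifications, but the argument is the same.
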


\begin{proof} 
We will first show the corollary when $I=0$.    For a fixed $\phi$, \eqr{e:ortho1} and \eqr{e:ortho2} give $n+k$ and $\frac{1}{2}\,(n+k-1)\,(n+k)+(n+k)+ (n+k) \, (N-(n+k))=(n+k) \, (N-\frac{1}{2}\,(n+k-1))$ homogeneous linear equations.  So $(n+k)\,(N-\frac{1}{2}\,(n+k-3))$ linear equations.  Thus, we can choose a  linear combination $\phi = \sum a_i \,u_i$ of the functions
\begin{align}
	u_0 , u_1 , \dots , u_{(n+k)\,(N-\frac{1}{2}\,(n+k-3))} 
\end{align}
with $\sum a_i^2 =1$ and so $\phi$  satisfies \eqr{e:ortho1} and \eqr{e:ortho2}.   Lemma \ref{p:Fstable} with this $\phi$ gives
\begin{align}
	1 &= \int \phi^2 \, \e^{-f} \leq 2\,\left(\frac{n}{k}+1\right)\,\int |\nabla \phi |^2 \, \e^{-f} = 2\,\left(\frac{n}{k}+1\right)\,\sum \left( a_i^2 \, \mu_i \right) \notag\\
	&\leq 2\,\left(\frac{n}{k}+1\right)\,\mu_{(n+k)\,(N-\frac{1}{2}\,(n+k-3))} \, \sum a_i^2 = 2\,\left(\frac{n}{k}+1\right)\,\mu_{(n+k)\,(N-\frac{1}{2}\,(n+k-3))} \, .
\end{align}
The case where $I>0$ follows with obvious modifications.  
\end{proof}

Specializing to $k=n$ and $I=0$ gives Theorem \ref{t:bdmu}.

\begin{proof}[Proof of Corollary \ref{t:closedsurf}]
Corollary \ref{t:bdmu2} gives for $n=2$ that $\mu_{2\,(2\,N-1)}\geq \frac{1}{4}$.   Combining this with Proposition \ref{p:kor} gives
  the corollary.
\end{proof}

Corollary \ref{t:closedsurf} extends easily to give general entropy bounds in terms of 
  the index $I>0$.  

\begin{Con}
There exist $\alpha < 1$ and $C_{\alpha} = C_{\alpha} (\alpha , \gamma)$ so that if $\Sigma^2 \subset \RR^N$, then the multiplicity of the $\frac{1}{2}$ eigenvalue for $\cL$ is at most
$C_{\alpha} \, \lambda^{\alpha} (\Sigma)$.    If so, then \cite{CM5} would give Theorem \ref{t:entropyextra2} without the assumption
  $N \geq C \, \lambda (\Sigma)$.
\end{Con}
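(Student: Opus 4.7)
The conjecture proposes a sublinear bound $C_\alpha\,\lambda^{\alpha}(\Sigma)$ with $\alpha<1$ on the multiplicity of the $\frac{1}{2}$-eigenvalue of $\cL$. The Korevaar estimate from Proposition~\ref{p:kor}, specialized to $\mu_k=\frac{1}{2}$, only yields the linear bound $k\leq 2\,C\,(1+\gamma)\,\lambda(\Sigma)$, so the entire content of the conjecture is to beat this linear dependence by a genuinely polynomial factor.

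My plan is to split the $\frac{1}{2}$-eigenspace $E$ of $\cL$ into a ``translation part'' $E^{\text{tr}}$ and a ``residual part'' $E^{\star}$. For $\phi\in E^{\text{tr}}$, in the spirit of the rigidity in Lemma~\ref{l:h212}, one has $\phi\,\bH=V^{\perp}$ for some $V\in\RR^N$; the dimension of $E^{\text{tr}}$ is therefore controlled by the dimension of the smallest linear subspace containing $\Sigma$, which by corollary $0.9$ in \cite{CM5} is at most linear in $\lambda$ and, once a positive lower bound on $\mu_1$ is available, is bounded by $C\,(1+\gamma)$ in the spirit of the proof of Theorem~\ref{t:entropyextra2}. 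For the residual part $E^{\star}$, I would run a weighted Korevaar--Grigor'yan--Netrusov--Yau capacitor argument against the measure $|\bH|^2\,\e^{-f}$, whose total mass is at most $\frac{n}{2}\,\lambda(\Sigma)$ by Lemma~\ref{l:UB}. The improved Rayleigh-quotient input comes from applying $F$-stability to $\phi\,\bH-W^{\perp}$, where $W^{\perp}$ is the $L^2$-projection of $\phi\,\bH$ onto translations, exactly as in the proof of Lemma~\ref{l:h212}: this caps the Rayleigh quotient of any residual $\frac{1}{2}$-eigenfunction against the weighted measure, so their count is controlled by the weighted area rather than the unweighted one.

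The main obstacle is that the standard Korevaar-type capacitor argument is linear in the ambient measure and sharp for general Riemannian surfaces, so a genuinely sublinear improvement must exploit shrinker-specific structure: the Hessian equation~\eqref{e:shrink} pinning $\Hess_f=\frac{1}{2}\,\langle\cdot,\cdot\rangle+A^{\bH}$, the concentration of $|\bH|^2\,\e^{-f}$ near $|x|\approx 2$ via Lemma~\ref{l:maxh}, and the rigidity from Lemma~\ref{l:h212} forcing every residual $\frac{1}{2}$-eigenfunction to produce a distinguished approximate translation direction. A plausible bootstrap is: high multiplicity produces many linearly independent approximate translations, which squeezes $\Sigma$ into a low-dimensional subspace, at which point Theorem~\ref{t:entropyextra2} bounds $\lambda$ and closes the loop with $\alpha<1$. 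Making this work in the regime $N<C\,\lambda$, precisely where Theorem~\ref{t:entropyextra2} breaks down, is the central difficulty, and is exactly what keeps the conjecture open.
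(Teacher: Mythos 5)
This statement is a \emph{conjecture}: the paper offers no proof of it, and indeed the whole point of its placement is that a sublinear multiplicity bound is precisely what is missing to remove the hypothesis $N \geq C\,\lambda(\Sigma)$ from Theorem \ref{t:entropyextra2}. Your proposal does not close this gap; it is a strategy outline whose decisive steps are exactly the open points. Concretely: (i) the only quantitative tool you invoke, the Korevaar/Grigor'yan--Netrusov--Yau capacitor method (Proposition \ref{p:kor}, whether run against $\e^{-f}\,dv_g$ or against the weighted measure $|\bH|^2\,\e^{-f}\,dv_g$), is linear in the total measure and is sharp in that generality, so it can never produce an exponent $\alpha<1$; you acknowledge that shrinker-specific structure must enter, but listing the Hessian equation \eqr{e:shrink}, Lemma \ref{l:maxh}, and Lemma \ref{l:h212} is not a mechanism for beating linearity. (ii) Your two main inputs are not available under the hypotheses of the conjecture: the conjecture is stated for an arbitrary shrinker $\Sigma^2\subset\RR^N$ of genus $\gamma$ with no $F$-stability assumption (it must be, since it is meant to supply the linear subspace via \cite{CM5} \emph{before} stability is used in the proof of Theorem \ref{t:entropyextra2}), whereas both the rigidity Lemma \ref{l:h212} and the comparison of $\phi\,\bH$ with its projection onto translations require $F$-stability. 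Moreover, Lemma \ref{l:h212} concerns the eigenvalue $\tfrac12$ of the weighted operator $\cL_{|\bH|^2}$, not of $\cL$; these are different operators with a priori unrelated spectra (and $\cL_{|\bH|^2}$ degenerates where $\bH$ vanishes), so your splitting of the $\tfrac12$-eigenspace of $\cL$ into a ``translation part'' characterized by $\phi\,\bH=V^{\perp}$ and a ``residual part'' is not well defined from what is proved in the paper.

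(iii) The proposed bootstrap is circular: you want high multiplicity to force $\Sigma$ into a low-dimensional subspace and then to quote Theorem \ref{t:entropyextra2} to bound $\lambda$ and ``close the loop,'' but that theorem is exactly what is unavailable in the regime $N< C\,\lambda(\Sigma)$ that the conjecture is designed to handle; invoking it (or corollary $0.9$ of \cite{CM5}, which also needs $N\geq C\,\lambda$) begs the question. Since you yourself note that making the argument work in this regime ``is exactly what keeps the conjecture open,'' what you have written is a research plan rather than a proof, and the statement remains open both in the paper and in your proposal.
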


\end{document}